\tikzstyle{dot}=[draw, fill =black, circle, inner sep=0pt, minimum size=2pt]
\numberwithin{equation}{section}
\theoremstyle{plain}
  \newtheorem{thm}{Theorem}[section]
  \newtheorem{cor}[thm]{Corollary}
  \newtheorem{lem}[thm]{Lemma}
  \newtheorem{prop}[thm]{Proposition}
\theoremstyle{definition}
  \newtheorem{Def}[thm]{Definition}
\theoremstyle{remark}
  \newtheorem{rem}[thm]{Remark}
  \newtheorem{eg}[thm]{Example}
  \newtheorem*{rem*}{Remark}
\numberwithin{equation}{section}
\newcommand{\bone}{\boldsymbol{1}}
\newcommand{\bl}{\boldsymbol{\lambda}}
\newcommand{\bm}{\boldsymbol{\mu}}
\newcommand{\bnu}{\boldsymbol{\nu}}
\newcommand{\UU}{\mathbf{U}}
\newcommand{\uu}{\upsilon}
\newcommand{\fs}{\mathcal{s}}
\newcommand{\Be}{\mathbf{e}}
\newcommand{\br}{\mathbf{r}}
\newcommand{\bs}{\mathbf{s}}
\newcommand{\bt}{\mathbf{t}}
\newcommand{\bu}{\mathbf{u}}
\newcommand{\bv}{\mathbf{v}}
\newcommand{\wt}{\mathsf{wt}_e}
\newcommand{\core}{\mathsf{core}_e}
\newcommand{\quot}{\mathsf{quot}_e}
\newcommand{\res}{\mathsf{res}_e}
\renewcommand{\min}{\mathrm{min}}
\renewcommand{\max}{\mathrm{max}}
\newcommand{\DDot}[1]{\mathop{\overset{#1}{\bullet}}}
\newcommand{\ee}{\varepsilon}
\newcommand{\ff}{\varphi}
\newcommand{\cs}{\mathsf{s}}
\newcommand{\B}{\mathsf{B}}
\newcommand{\BB}{\mathcal{B}}
\newcommand{\PP}{\mathcal{P}}
\newcommand{\HH}{\mathcal{H}}
\newcommand{\FF}{\mathbb{F}}
\newcommand{\EW}{\widehat{\mathbf{W}}}
\newcommand{\AW}{\mathbf{W}}
\newcommand{\sym}{\mathfrak{S}}
\newcommand{\Sp}{S}
\newcommand{\AAnobar}{\mathcal{A}_e^{\ell}}
\newcommand{\AAbar}{\overline{\mathcal{A}}_e^{\ell}}
\newcommand{\len}{\mathcal{l}}
\newcommand{\Hub}{\mathsf{hub}_e}
\newcommand{\hub}{\delta}
\newcommand{\wtH}{\mathsf{wt}_{\HH}}
\newcommand{\coreH}{\mathsf{core}_{\HH}}
\def\Z{{\mathbb Z}}
\title[Cores and weights]{Cores and weights of multipartitions \\ and
blocks of Ariki-Koike algebras}
\author{Yanbo Li}
\address[Y. Li]{School of Mathematics and Statistics, Northeastern University at Qinhuangdao, Qinhuangdao, 066004, People's Republic of China.}
\email{liyanbo707@163.com}
\author{Kai Meng Tan}
\address[K. M. Tan]{Department of Mathematics, National University of Singapore, Block S17, 10 Lower Kent Ridge Road. Singapore 119760.}
\email{tankm@nus.edu.sg}
\thanks{The first author is partially supported by Hebei Natural Science Foundation (A2021501002) and NSF of Fujian Province (No.2023J01126)}
\date{August 2024}
\subjclass[2020]{20C08, 05E10}
\begin{document}

\maketitle

\begin{abstract}
Let $e$ be an integer at least two.  We define the $e$-core and the $e$-weight of a multipartition associated with a multicharge as the $e$-core and the $e$-weight of its image under the Uglov map.
We do not place any restriction on the multicharge for these definitions. We show how these definitions lead to the definition of the $e$-core and the $e$-weight of a block of an Ariki-Koike algebra with quantum parameter $e$, and an analogue of Nakayama's `Conjecture' that classifies these blocks.  Our definition of $e$-weight of such a block coincides with that first defined by Fayers \cite{Fayers-weight}.  We further generalise the notion of a $[w:k]$-pair for Iwahori-Hecke algebra of type $A$ to the Ariki-Koike algebras, and obtain a sufficient condition for such a pair to be Scopes equivalent.
\end{abstract}

\section{Introduction}

The Ariki-Koike algebras, which are also known as cyclotomic Hecke algebras of type $G(r, 1, n)$, were introduced in \cite{AK}, \cite{Broue-Malle} and \cite{Cherednik}.
They include the Iwahori-Hecke algebras of types $A$ and $B$ as special cases, and may be considered as higher analogues of these well-studied algebras.
Besides playing an important role in modular representation
theory of finite groups of Lie type, they are related to various significant objects such as quantum groups and
rational Cherednik algebras.
Consequently, the Ariki-Koike algebras have been intensively and continuously
studied since their appearance (see \cite{Ariki-96, Ariki-01, BK-GradedDecompNos, DJM, Fayers-weight, Lyle-Mathas, Lyle-Ruff, LQ-movingvector} for examples).
The interest on these algebras is further
strengthened by a fundamental result of Brundan and Kleshchev \cite{BK-KLR}, in which an explicit isomorphism between
the blocks of Ariki-Koike algebras and cyclotomic Khovanov-Lauda-Rouquier algebras (introduced in \cite{KL,Rouquier} independently) in type
$A$ were constructed, and many profound results emerged.

As a higher analogue of the Iwahori-Hecke algebras of type $A$, one would hope that some of the results for the latter can be generalised to the Ariki-Koike algebras.  In particular, attempts (see, for example, \cite{Fayers-weight} and \cite{JL-cores}) have been made to generalise the role of $e$-cores and $e$-weights of partitions in the representation theory of the Iwahori-Hecke algebras of type $A$ to that of multipartitions for the Ariki-Koike algebras.  We make another attempt in this paper which we believe is very natural, and our generalisation leads us to partially answer a question relating to Scopes equivalence for Ariki-Koike algebras.

In this paper, for each $\ell$-partition $\bl$ associated with an $\ell$-charge $\bt$ (without any restriction on $\bt$), we define its $e$-core $\core(\bl;\bt)$ and its $e$-weight $\wt(\bl;\bt)$ to be those of the partition which is the image of such pair under the Uglov map.
The set of $\ell$-partitions associated with $\ell$-charges admits a natural right action of the extended affine Weyl group $\EW_{\ell}$ of type $A^{(1)}_{\ell-1}$.

The first main result of this paper (Theorem \ref{T:core and weight}) is that the $e$-core of $(\bl;\bt)$ is invariant under this action, while the $e$-weight of $(\bl;\bt)$ achieves the minimum in its $\EW_{\ell}$-orbit precisely when $\bt$ is ascending with its last component no more than $e$ larger than its first.

Our second main result (Corollary \ref{C:Naka}) is a version of Nakayama's `Conjecture' for the blocks of Ariki-Koike algebras: two Specht modules $\Sp^{\bl}$ and $\Sp^{\bm}$ lie in the same block of an Ariki-Koike algebra associated with the multicharge $\br$ if and only if $\core(\bl;\br) = \core(\bm;\br)$ and $\min(\wt((\bl;\br)^{\EW_{\ell}})) = \min(\wt((\bm;\br)^{\EW_{\ell}}))$.
This result leads to the definition of $e$-cores and $e$-weights of multipartitions as well as of the blocks of Ariki-Koike algebras.  We note that our definition of $e$-weights for multipartitions equals that first defined by Fayers in \cite{Fayers-weight}, even though the definitions are seemingly different at first sight.

Our third main result (Theorem \ref{T:Scopes}) relates to Scopes equivalence between two blocks of Ariki-Koike algebras (associated with the same multicharge), which is a special type of Morita equivalence.  We define $[w:k]$-pairs for two such blocks with $e$-weight $w$, and show that such pairs are Scopes equivalent when $k \geq w$.

The paper is organised as follows:  we provide the necessary background on the combinatorics of $\beta$-sets and partitions in the next section.  In Section \ref{S:core and weight}, we define the $e$-core and $e$-weight of a multipartition associated with a multicharge and prove some preliminary results, culminating with the proof of our first main theorem.  In Section \ref{S:AK}, we look at the blocks of Iwahori-Hecke algebras and prove the latter two main theorems.

Throughout, $\mathbb{Z}^+$ denotes the set of positive integers, and we fix $e,\ell \in \mathbb{Z}^+$ with $e \geq 2$.  For $a,b \in \mathbb{Z}$, we write $a \equiv_e b$ for $a \equiv b \pmod e$.  We often identify $\Z/e\Z$ with $\{0,1,\dotsc, e-1\}$ without comment.

\section{Combinatorics of $\beta$-sets and partitions}

In this section, we provide the necessary background on multi-$\beta$-sets and multipartitions, and prove some preliminary results.

\subsection{$\beta$-sets and abaci} \label{SS:beta sets}

A $\beta$-set $\B$ is a subset of $\Z$ such that both $\max(\B)$ and $\min(\Z \setminus \B)$ exist. Its charge $\fs(\B)$ is defined as
$$\fs(\B) := |\B \cap \Z_{\geq 0}| - |\Z_{<0} \setminus \B|.$$
We denote the set of all $\beta$-sets by $\BB$ and the set of all $\beta$-sets with charges $s$ by $\BB(s)$.  Clearly, $\BB= \bigcup_{s \in \Z} \BB(s)$ (disjoint union).

Any subset $S$ of $\Z$ can be represented on a number line---which is usually depicted as a horizontal line with positions labelled by $\Z$ in an ascending order going from left to right---by putting a bead at the position $a$ for each $a \in S$.  We call this the $\infty$-abacus display of $S$, and
$S$ is a $\beta$-set if and only if its $\infty$-abacus display has a least vacant position and a largest bead.

When $\B$ is a $\beta$-set, $\fs(\B)$ is the number of beads at positions $\geq 0$ minus the number of vacant positions at positions $< 0$ in its $\infty$-abacus display.
Alternatively, we can also obtain $\fs(\B)$ by repeatedly sliding beads to the vacant positions on their immediate left until all vacant positions lie on the right of all beads, and the least vacant position when this happens is $\fs(B)$.

One easily obtains the $e$-abacus display of a subset $S$ of $\Z$ from its $\infty$-abacus display by cutting up the $\infty$-abacus into sections  $\{ ae, ae+1, \dotsc, ae + e-1 \}$ ($a \in \Z$), and putting the section $\{ ae, ae+1, \dotsc, ae + e-1 \}$ directly on top of $\{ (a+1)e, (a+1)e+1, \dotsc, (a+1)e + e-1 \}$.  Thus the $e$-abacus have $e$ infinitely long vertical runners, which we label as $0, 1,\dotsc, e-1$ going from left to right, and countably infinitely many horizontal rows, labelled by $\Z$ in an ascending order going from top to bottom, and the position on row $a$ and runner $i$ equals $ae + i$.

From an $e$-abacus display of $S \subseteq \Z$, we can obtain its {\em $e$-quotient} $\quot(S) = (S_0,\dotsc, S_{e-1})$ as follows: treat each runner $i$ as an $\infty$-abacus, with the position $ae+i$ relabelled as $a$, and obtain the subset $S_i$ from the beads in runner $i$ of the $e$-abacus of $S$.  Formally,
$$
S_i := \{ a \in \Z \mid ae+i \in S \} = \{ \tfrac{x-i}{e} \mid x \in S,\ x \equiv_e i \}.
$$
It is not difficult to see that the $\quot$ is bijective as a function from the power set $\mathcal{P}(\Z)$ to $(\mathcal{P}(\Z))^e$.

Let $\B$ be a $\beta$-set, with $\quot(\B) = (\B_0,\dotsc, \B_{e-1})$.  Then it is easy to see that
\begin{equation}
\fs(\B) = \sum_{i=0}^{e-1} \fs(\B_i). \label{E:charge}
\end{equation}
Furthermore, its {\em $e$-core} $\core(\B)$ can be obtained by repeatedly sliding its beads in its $e$-abacus up their respectively runners to fill up the vacant positions above them, and its {\em $e$-weight $\wt(\B)$} is the total number of times the beads move one position up their runners.
More formally, if $\B_i = \{b_{i1} > b_{i2} > \dotsb \}$ for all $i \in \Z/e\Z$, then
\begin{align*}
\core(\B) &:= \bigcup_{i \in \Z/e\Z} \{ ae + i \mid a < \fs(\B_i) \}; \\
\wt(\B) &:= \sum_{i \in \Z/e\Z} \, \sum_{a \in \mathbb{Z}^+} (b_{ia} - \fs(\B_i)+a)
\end{align*}
(note that since each $\B_i$ is a $\beta$-set, $b_{ia} = \fs(\B_i)-a$ for all large enough $a$ so that the infinite sum in $\wt(\B)$ is actually a finite one).
We note also that
\begin{equation*}
\fs(\core(\B)) = \sum_{i \in \Z/e\Z} \fs(\B_i)= \fs(\B)  
\end{equation*}
by \eqref{E:charge}.

\begin{eg}
Let $\B = \{ 5, 4, 2, -1, -3, -4, -5,  \dotsc \}$.
$$
\begin{matrix}
\\
\\
\\
\begin{tikzpicture}[scale=0.45]
\foreach \x in {-9,-8,-7,-6,-5,-4, -3, -1, 2, 4, 5}
{
\draw [radius=2mm, fill=black] (\x, -4) circle;
}
\foreach \x in {-2,0,1,3,6,7,8,9}
{
\draw (\x-0.15, -4) -- (\x+ 0.15, -4);
}
\foreach \x in {-10,10}
{
\node at (\x,-4) {\tiny $\dotsm$};
}

\draw [dashed] (-0.45,-5.0) -- (-0.45, -3.0);
\foreach \x in {-9,-6,-3,3,6,9}
{
\draw [dashed] (\x-0.45,-4.5) -- (\x-0.45, -3.5);
}

\end{tikzpicture}\\
\text{$\infty$-abacus display of $\B$} \\

\\
\text{\tiny (The positions on the right of the longer vertical dashed line of the $\infty$-abacus display as}\\
\text{\tiny well as those below the horizontal dashed line of the $3$-abacus display are non-negative.)}
\end{matrix}
\qquad
\xrightarrow{\phantom{mm}}
\qquad
\begin{matrix}
\begin{tikzpicture}[scale=0.5]
\foreach \x in {0,1,2}
\foreach \y in {-6,-12}
{
\node at (\x,\y) {$\vdots$};
}

\foreach \x in {0,1,2}
\foreach \y in {-7.75,-7}
{
\draw [radius=2mm, fill=black] (\x,\y) circle;
}

\draw [radius=2mm, fill=black] (0,-8.5) circle;
\draw (0.85, -8.5) -- (1.15, -8.5);
\draw [radius=2mm, fill=black] (2,-8.5) circle;

\draw (-0.15, -9.25) -- (0.15, -9.25);
\draw (0.85, -9.25) -- (1.15, -9.25);
\draw [radius=2mm, fill=black] (2,-9.25) circle;

\draw (-0.15, -10) -- (0.15, -10);
\draw [radius=2mm, fill=black] (1,-10) circle;
\draw [radius=2mm, fill=black] (2,-10) circle;

\foreach \x in {0,1,2}
\foreach \y in {-10.75,-11.5}
{
\draw (\x-0.15,\y) -- (\x+0.15,\y);
}

\draw [dashed] (-0.3,-8.875) -- (2.3,-8.875);
\end{tikzpicture} \\
\text{$3$-abacus} \\
\text{display of $\B$}
\end{matrix}
$$
Thus $\mathsf{core}_3(\B) = \{ 5,2,-1,-2,-3,\dotsc \}$ and $\mathsf{wt}_3(\B) = 2$.
\end{eg}

Given $\beta$-set $\B$ and $k \in \Z$, define
$$
\B^{+k} :=  \{ x + k \mid x \in \B \}.
$$
Then $\B^{+k}$ is a $\beta$-set with $\fs(\B^{+k}) = \fs(\B) + k$.  Furthermore, if $\quot(\B) = (\B_0,\B_1,\dotsc, \B_{e-1})$, then $\quot(\B^{+1}) = ((\B_{e-1})^{+1}, \B_0,\dotsc, \B_{e-2})$.
Consequently,
\begin{equation} \label{E:quot}
\quot(\B^{+ke}) = ((\B_0)^{+k},(\B_1)^{+k},\dotsc, (\B_{e-1})^{+k}). \end{equation}
In addition, we also have
\begin{align}
\core(\B^{+k}) &= (\core(\B))^{+k}; \label{E:corebeta} \\
\wt(\B^{+k}) &= \wt(\B).  \label{E:wtbeta}
\end{align}

We have the following easy lemma:

\begin{lem} \label{L:beta difference}
Let $\B$ and $\mathsf{C}$ be $\beta$-sets.  Then
$$|\B \setminus \mathsf{C}| - |\mathsf{C} \setminus \B| = \fs(\B) - \fs(\mathsf{C}).$$
\end{lem}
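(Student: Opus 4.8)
The plan is to reduce the identity to a telescoping count, by transforming $\mathsf{C}$ into $\B$ one bead at a time and tracking how $\fs$ changes at each step. The starting point is the observation that, since $\B$ and $\mathsf{C}$ are both $\beta$-sets, their $\infty$-abacus displays both have a largest bead and a least vacant position; hence there exists $N \in \mathbb{Z}^+$ such that both $\B$ and $\mathsf{C}$ contain every integer $< -N$ and omit every integer $> N$. Consequently $\B$ and $\mathsf{C}$ can differ only within the positions between $-N$ and $N$, so both $\B \setminus \mathsf{C}$ and $\mathsf{C} \setminus \B$ are finite and the two sides of the claimed identity are genuine integers.

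The key step is to record the effect of a single bead move on the charge. Let $\mathsf{D}$ be any $\beta$-set and $x \in \Z$. I claim that $\mathsf{D} \cup \{x\}$ and $\mathsf{D} \setminus \{x\}$ are again $\beta$-sets (any set differing from a $\beta$-set in finitely many positions is a $\beta$-set, by thresholds as above), and that adjoining a bead raises $\fs$ by exactly $1$ while deleting one lowers it by exactly $1$, irrespective of the sign of $x$. Indeed, if $x \geq 0$ then inserting $x$ increases $|\mathsf{D} \cap \Z_{\geq 0}|$ by $1$ and leaves $|\Z_{<0} \setminus \mathsf{D}|$ unchanged, whereas if $x < 0$ then inserting $x$ leaves $|\mathsf{D} \cap \Z_{\geq 0}|$ unchanged and decreases $|\Z_{<0} \setminus \mathsf{D}|$ by $1$; in either case $\fs$ increases by $1$, directly from its definition $\fs(\mathsf{D}) = |\mathsf{D} \cap \Z_{\geq 0}| - |\Z_{<0} \setminus \mathsf{D}|$. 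The statement for deletion follows symmetrically.

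With this in hand I would conclude by the telescoping argument itself: transform $\mathsf{C}$ into $\B$ by first adjoining the $|\B \setminus \mathsf{C}|$ elements of $\B \setminus \mathsf{C}$ one at a time, and then deleting the $|\mathsf{C} \setminus \B|$ elements of $\mathsf{C} \setminus \B$ one at a time, remaining within $\BB$ at every stage by the previous paragraph. Each addition contributes $+1$ and each deletion contributes $-1$ to the charge, so summing the increments gives
$$\fs(\B) = \fs(\mathsf{C}) + |\B \setminus \mathsf{C}| - |\mathsf{C} \setminus \B|,$$
which is precisely the asserted identity after rearrangement. There is no real obstacle here; the only point requiring any care is the uniform $\pm 1$ behaviour of $\fs$ under single-bead moves, which hinges on the deliberate asymmetry in the definition of $\fs$ between nonnegative beads and negative gaps. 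Alternatively, one could bypass the induction entirely by splitting each of $\B \setminus \mathsf{C}$ and $\mathsf{C} \setminus \B$ into its nonnegative and negative parts and comparing $|\B \cap \Z_{\geq 0}| - |\mathsf{C} \cap \Z_{\geq 0}|$ and $|\Z_{<0} \setminus \mathsf{C}| - |\Z_{<0} \setminus \B|$ against these parts, which produces the same cancellation in a single line.
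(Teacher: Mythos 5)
Your proof is correct, but it proceeds along a genuinely different route from the paper's. You transform $\mathsf{C}$ into $\B$ by single-bead moves and track the charge increment at each step, the key point being that inserting any bead raises $\fs$ by exactly $1$ (and deleting one lowers it by exactly $1$) regardless of the sign of the position --- a uniform behaviour forced by the asymmetric definition $\fs(\mathsf{D}) = |\mathsf{D} \cap \Z_{\geq 0}| - |\Z_{<0} \setminus \mathsf{D}|$. The paper instead shifts both sets: it picks $k$ large enough that $\Z_{<0} \subseteq \B^{+k} \cap \mathsf{C}^{+k}$, so that all discrepancies live in $\Z_{\geq 0}$, reduces the claim to a cardinality comparison $|\B' \setminus \mathsf{C}'| - |\mathsf{C}' \setminus \B'| = |\B'| - |\mathsf{C}'|$ between the finite sets $\B' = \B^{+k} \cap \Z_{\geq 0}$ and $\mathsf{C}' = \mathsf{C}^{+k} \cap \Z_{\geq 0}$, and then invokes the shift rule $\fs(\B^{+k}) = \fs(\B) + k$ to undo the translation. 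What the paper's argument buys is brevity: once the shift identity is available (it is recorded just before the lemma), there is no induction at all, only a one-line cancellation of finite cardinalities. What your argument buys is self-containedness: it uses nothing beyond the definition of $\fs$, and the $\pm 1$ bead-move principle it isolates is a reusable observation in its own right (it is essentially the statement that $\fs$ is the unique extension to $\BB$ of the counting function on finite modifications, normalised by $\fs(\Z_{<0}) = 0$). Your closing alternative --- splitting $\B \setminus \mathsf{C}$ and $\mathsf{C} \setminus \B$ into nonnegative and negative parts and cancelling directly against the two terms of the definition of $\fs$ --- is also valid and is in spirit the closest of the three to a pure computation; any of these would serve.
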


\begin{proof}
Choose $k \in \Z^+$ large enough so that $\Z_{<0} \subseteq \B^{+k} \cap C^{+k}$.  Let $\B' = \B^{+k} \cap \Z_{\geq 0}$ and $\mathsf{C}' = \mathsf{C}^{+k} \cap \Z_{\geq 0}$. Then
\begin{align*}
|\B \setminus \mathsf{C}| - |\mathsf{C} \setminus \B| &=
|\B^{+k} \setminus \mathsf{C}^{+k}| - |\mathsf{C}^{+k} \setminus \B^{+k}|
= |\B' \setminus \mathsf{C}'| - |\mathsf{C}' \setminus \B'| \\
&= |\B'| - |\mathsf{C}'|
= |\B^{+k} \cap \Z_{\geq 0}| - |\mathsf{C}^{+k} \cap \Z_{\geq 0}| \\
&= \fs(\B^{+k}) - \fs(\mathsf{C}^{+k})  \
= \fs(\B) - \fs(\mathsf{C}).
\end{align*}
\end{proof}

Following Fayers \cite{Fayers-weight}, for each $i \in \Z/e\Z$ and $\B \in \BB$, define $\hub_i(\B) = \hub_{i,e}(\B)$ by
\begin{align*}
\hub_i(\B) := |\{ x \in \B \mid x\equiv_e i,\, x-1 \notin \B \}| - |\{ x \in \B \mid x \equiv_e i-1,\, x+1 \notin \B \}|.
\end{align*}
Furthermore, call the $e$-tuple $(\hub_0(\B), \dotsc, \hub_{e-1}(\B))$ the {\em $e$-hub of $\B$}, denoted $\Hub(\B)$.

\begin{lem} \label{L:hub}
Let $\B$ be a $\beta$-set, and $i \in \Z/e\Z$.
\begin{enumerate}
\item $\hub_i(\B^{+k}) = \hub_{i-k}(\B)$.

\item If $\quot(\B) = (\B_0,\dotsc, \B_{e-1})$.  Then
$$
\hub_i(\B) =
\fs(\B_i) - \fs(\B_{i-1}) - \delta_{i0},$$ where
$\delta_{i0}$ equals $1$ when $i=0$, and $0$ otherwise.

In particular, $\Hub(\B) = \Hub(\core(\B))$.
\end{enumerate}
\end{lem}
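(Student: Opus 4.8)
The plan is to reduce everything to the counting Lemma~\ref{L:beta difference}. Part (1) is a pure reindexing. Writing a typical element of $\B^{+k}$ as $y = x+k$ with $x \in \B$, the condition $y \equiv_e i$ becomes $x \equiv_e i-k$ and the condition $y-1 \notin \B^{+k}$ becomes $x-1 \notin \B$, since $\B^{+k}$ is just $\B$ translated rigidly by $k$. The identical substitution turns the second counting set defining $\hub_i(\B^{+k})$ into the second counting set of $\hub_{i-k}(\B)$ (note $y \equiv_e i-1$ becomes $x \equiv_e (i-k)-1$), so the two alternating sums agree term by term. I expect no difficulty here.

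For part (2), first I would express each of the two sets counted in $\hub_i(\B)$ in terms of the quotient components. Using $x = ae+i$, a bead $x \in \B$ on runner $i$ corresponds to $a \in \B_i$, and I track when its left neighbour $x-1$ is vacant. The clean case is $i \neq 0$: then $x-1 = ae+(i-1)$ lies on runner $i-1$ in the same row, so $x-1 \notin \B$ translates to $a \notin \B_{i-1}$; the first count becomes $|\B_i \setminus \B_{i-1}|$ and, symmetrically, the second count becomes $|\B_{i-1}\setminus \B_i|$. Since each $\B_j$ is a $\beta$-set, Lemma~\ref{L:beta difference} then gives $\hub_i(\B) = \fs(\B_i) - \fs(\B_{i-1})$, matching the claimed formula as $\delta_{i0}=0$.

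The one subtlety, and the only real obstacle, is the boundary case $i = 0$: the left neighbour of a bead $ae$ on runner $0$ is $ae - 1 = (a-1)e + (e-1)$, which sits on runner $e-1$ one row higher. Thus the vacancy condition becomes $a - 1 \notin \B_{e-1}$, that is $a \notin (\B_{e-1})^{+1}$, and after the corresponding shift the second count also compares against $(\B_{e-1})^{+1}$, giving $\hub_0(\B) = |\B_0 \setminus (\B_{e-1})^{+1}| - |(\B_{e-1})^{+1} \setminus \B_0|$. I would then apply Lemma~\ref{L:beta difference} to the pair $\B_0$ and $(\B_{e-1})^{+1}$ to get $\hub_0(\B) = \fs(\B_0) - \fs((\B_{e-1})^{+1})$, and finally use $\fs((\B_{e-1})^{+1}) = \fs(\B_{e-1}) + 1$ to produce exactly the $-\delta_{00} = -1$ correction. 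This one-row wrap-around shift is precisely what the Kronecker delta records.

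For the \emph{in particular} statement I would feed part (2) the quotient of $\core(\B)$. From the explicit description $\core(\B) = \bigcup_i \{ae+i \mid a < \fs(\B_i)\}$ one reads off that $\quot(\core(\B)) = (\mathsf{C}_0,\dotsc,\mathsf{C}_{e-1})$ with $\mathsf{C}_i = \Z_{<\fs(\B_i)}$, and a one-line check gives $\fs(\Z_{<m}) = m$, so that $\fs(\mathsf{C}_i) = \fs(\B_i)$ for every $i$. Part (2) then yields $\hub_i(\core(\B)) = \fs(\mathsf{C}_i) - \fs(\mathsf{C}_{i-1}) - \delta_{i0} = \fs(\B_i) - \fs(\B_{i-1}) - \delta_{i0} = \hub_i(\B)$ for all $i$, whence $\Hub(\B) = \Hub(\core(\B))$.
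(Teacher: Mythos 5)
Your proof is correct and follows essentially the same route as the paper's: both translate the two counting sets into conditions on the quotient components, handle the $i=0$ wrap-around by comparing $\B_0$ with $(\B_{e-1})^{+1}$, and then invoke Lemma~\ref{L:beta difference} together with $\fs(\B^{+1}) = \fs(\B)+1$. The only difference is presentational---the paper treats both cases simultaneously by writing the shift as $(\B_{i-1})^{+\delta_{i0}}$ whereas you split into the cases $i \neq 0$ and $i = 0$---and your derivation of the \emph{in particular} statement from the charges of $\quot(\core(\B))$ is likewise the intended one.
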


\begin{proof}
Part (1) is clear from definition.  For part (2), we have
\begin{align*}
  \hub_i(\B) &
  = |\{ x \in \B \mid x\equiv_e i,\, x-1 \notin \B \}| - |\{ x \in \B \mid x \equiv_e i-1,\, x+1 \notin \B\}| \\
  &=|\{ a \in \B_i \mid a-\delta_{i0} \notin \B_{i-1} \} | -  |\{ a \in \B_{i-1} \mid a+\delta_{i0} \notin \B_{i} \} | \\
  &= |\B_i \setminus (\B_{i-1})^{+\delta_{i0}}| - |(\B_{i-1})^{+\delta_{i0}} \setminus \B_i| \\
  &= \fs(\B_i) - \fs((\B_{i-1})^{+\delta_{i0}}) = \fs(\B_i) - (\fs(\B_{i-1}) + \delta_{i0}),
\end{align*}
where the first equality in the last line follows from Lemma \ref{L:beta difference}.
\end{proof}

\subsection{Partitions}
A partition is a weakly decreasing infinite sequence of nonnegative integers that is eventually zero.  Given a partition $\lambda = (\lambda_1,\lambda_2,\dotsc)$, we write \begin{align*}
\len(\lambda) &:= \max \{ a \in \Z^+ : \lambda_a >0 \} ; \\
|\lambda| &:= \sum_{a=1}^{\ell(\lambda)} \lambda_a.
\end{align*}
We often identify $\lambda$ with the finite sequence $(\lambda_1,\dotsc, \lambda_{\len(\lambda)})$.
When $|\lambda| = n$, we say that $\lambda$ is a partition of $n$, and denote it as $\lambda \vdash n$.
We write $\PP(n)$ for the set of partitions of $n$ and $\PP$ for the set of all partitions.

Let $\lambda = (\lambda_1,\lambda_2,\dotsc)\in \PP$.
For each $t \in \Z$,
$$
\beta_t(\lambda) := \{ \lambda_a+t-a \mid a \in \Z^+\}
$$
is a $\beta$-set, with $\fs(\beta_t(\lambda)) = t$.  We call $\beta_t(\lambda)$ the {\em $\beta$-set of $\lambda$ with charge $t$}.

Conversely, given a $\beta$-set $\B$, say $\B = \{ b_1, b_2, \dotsc \}$ where $b_1 > b_2 > \dotsb$, let
$$\lambda_a = |\{ x \in \Z \setminus B : x < b_a \}|$$ for each $a \in \Z^+$. Then $\lambda := (\lambda_1,\lambda_2, \dotsc)$ is a partition, and it is not difficult to see that $\lambda_a = b_a + a - \fs(\B)$, so that  $\beta_{\fs(\B)}(\lambda) = \B$.

Thus, we have a bijection $\beta : \Z \times \PP \to \BB$ defined by $\beta(t, \lambda) = \beta_t(\lambda)$, and for each $t \in \mathbb{Z}$, $\beta_t(-) = \beta(t,-)$ gives a bijection between $\PP$ and $\BB(t)$.
In particular, when $\B$ is a $\beta$-set, we shall write $\beta^{-1}(\B)$ for the unique partition $\lambda$ satisfying
$$\B = \beta (\fs(\B), \lambda) = \beta_{\fs(\B)}(\lambda).$$

Note that for any partition $\lambda$ and $k \in \Z$, we have
\begin{equation}
\beta_{t+k}(\lambda) = (\beta_t(\lambda))^{+k}. \label{E:beta+}
\end{equation}
Consequently, \eqref{E:wtbeta} and \eqref{E:corebeta} imply that
\begin{align*}
\wt(\beta_{t+k}(\lambda)) = \wt((\beta_t(\lambda))^{+k}) &= \wt(\beta_t(\lambda)); \\
\core(\beta_{t+k}(\lambda)) = \core((\beta_t(\lambda))^{+k})&= (\core(\beta_t(\lambda))^{+k}).
\end{align*}
Thus we can define the $e$-weight $\wt(\lambda)$ and the $e$-core $\core(\lambda)$ of  $\lambda$ to be $\wt(\beta_t(\lambda))$ and $\beta^{-1}(\core(\beta_t(\lambda)))$ respectively for any $t \in \Z$.

We note that if $\B$ is a $\beta$-set with $\quot(\B) = (\B_0,\dotsc, \B_{e-1})$, then
\begin{align}
\quot(\core(\B)) &= (\beta_{\fs(\B_0)} (\varnothing), \dotsc, \beta_{\fs(\B_{e-1})} (\varnothing)), \label{E:quot-of-core} \\
\wt(\B) &= \sum_{i \in \Z/e\Z} |\beta^{-1}(\B_i)|. \label{E:wt-quot}
\end{align}


\subsection{Multipartitions} \label{SS:multipartitions}

A {\em multipartition} 
is an element of $\PP^m$ for some $m \in \Z^+$.
An {\em $\ell$-partition of $n$} is an element $\bl = (\lambda^{(1)}, \dotsc, \lambda^{(\ell)}) \in \PP^{\ell}$ such that $\sum_{j=1}^{\ell} |\lambda^{(j)}| = n$.  We write $\PP^{\ell}(n)$ for the set of all $\ell$-partitions of $n$.

Let $\bl \in \PP^{\ell}$.  Its {\em Young diagram} is
$$[\bl] = \{ (a,b,j) \in (\Z^+)^3 \mid a \leq \len(\lambda^{(j)}),\ b \leq \lambda^{(j)}_a,\ j \leq \ell \}.$$
Elements of $[\bl]$ are called nodes.

We usually associate $\bl$ with an {\em $\ell$-charge} $\bt \in \Z^{\ell}$, and we write $(\bl; \bt)$ for such a pair.  

Let $\bt = (t_1,\dotsc, t_{\ell}) \in \Z^{\ell}$.  The {\em $e$-residue of $(a,b,j) \in [\bl]$  relative to $\bt$}, denoted $\res^{\bt}(a,b,j)$,
is the congruence class of $b-a + t_j$ modulo $e$.
A node of $[\bl]$ with $e$-residue $i$ relative to $\bt$ is called an {\em $i$-node of $\bl$ relative of $\bt$}.
The {\em {$e$-residues of $\bl$ relative to $\bt$}}, denoted $\res^{\bt}(\bl)$, is the multiset containing precisely $\res^{\bt}(a,b,j)$ for all $(a,b,j) \in [\bl]$.

If removing an $i$-node of $\bl$ relative to $\bt$ from $[\bl]$ yields a Young diagram $[\bm]$ for some multipartition $\bm$, we call such a node {\em a removable $i$-node of $\bl$ relative to $\bt$} and {\em an addable $i$-node of $\bm$ relative to $\bt$}.
If $(a,b,j)$ is a removable $i$-node of $\bl$ relative to $\bt$, then $b = \lambda^{(j)}_a > \lambda^{(j)}_{a+1}$, and so such a node corresponds to an $x \in \beta_{t_j}(\lambda^{(j)})$ (namely $x = b-a+t_j$) such that $x \equiv_e i$ and $x -1 \notin \beta_{t_j}(\lambda^{(j)})$, and hence a bead on runner $i$ of the $e$-abacus display of $\beta_{t_j}(\lambda^{(j)})$ whose preceding position is vacant.
Similarly, an addable $i$-node of $\bl$ relative to $\bt$ corresponds to a bead on runner $i-1$ of the $e$-abacus display of $\beta_{t_j}(\lambda^{(j)})$ for some $j$ whose succeeding position is vacant.

For $\bl = (\lambda^{(1)},\dotsc, \lambda^{(\ell)}) \in \PP^{\ell}$ and $\bt = (t_1,\dotsc, t_{\ell}) \in \Z^{\ell}$, we define
$$
\hub_i^{\bt}(\bl) := \sum_{j=1}^{\ell} \delta_i(\beta_{t_j}(\lambda^{(j)}))
$$
for each $i \in \Z/e\Z$.  The {\em $e$-hub of $\bl$ relative to $\bt$}, denoted $\Hub^{\bt}(\bl)$, is then
$$
\Hub^{\bt}(\bl) := (\hub_0^{\bt}(\bl), \dotsc, \hub_{e-1}^{\bt}(\bl)).
$$

For convenience, for $\bl = (\lambda^{(1)},\dotsc, \lambda^{(\ell)}) \in \PP^{\ell}$ and $\bt = (t_1,\dotsc, t_{\ell}) \in \Z^{\ell}$, we shall use the shorthand $\beta_{\bt}(\bl)$ for
$(\beta_{t_1}(\lambda^{(1)}), \dotsc, \beta_{t_{\ell}}(\lambda^{(\ell)})) \in \BB^{\ell}$.

\subsection{Uglov's map} \label{SS:Uglov}

For each $j \in \{1, 2, \dotsc, \ell\}$, define $\uu_j: \Z \to \Z$ by
$$
\uu_j(x) = (x - \bar{x})\ell + (\ell-j)e + \bar{x}.
$$
Here $\bar{x}$ is defined by $x \equiv_e \bar{x} \in \{0,1,\dotsc,e-1\}$.   In other words, if $x = ae + b$, where $a,b \in \Z$ with $0 \leq b < e$, then $$\uu_j(x) = ae\ell + (\ell-j)e + b.$$

We record some properties of these $\uu_j$'s which can be easily verified.

\begin{lem} \label{L:Uglov map}
Let $j \in \{1,\dotsc, \ell \}$ and $x \in \Z$.  Then:
\begin{enumerate}
\item
$\uu_j(x) \equiv_e x$;
\item $\uu_j$ is injective, and $\uu_i(x) \geq 0$ if and only if $x \geq 0$;
\item $\uu_j (\Z) = \{ x \in \Z \mid \lfloor \tfrac{x}{e} \rfloor \equiv_{\ell} \ell-j \}$;
\item
$
\Z = \bigcup_{j=1}^{\ell} \uu_j(\Z)$ (disjoint union);
\item
$\uu_j(x) - e = \uu_{j+1}(x)$ if $j \in \{1,\dotsc, \ell-1\}$, and $\uu_{\ell}(x) - e = \uu_1(x-e)$.
\end{enumerate}
\end{lem}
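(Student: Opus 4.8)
The plan is to reduce everything to a single normal form for $\uu_j(x)$. Writing $x = ae + b$ with $a \in \Z$ and $0 \le b < e$ (so that $\bar{x} = b$ and $x - \bar{x} = ae$), the definition gives
$$\uu_j(x) = ae\ell + (\ell - j)e + b.$$
The one inequality that drives the whole proof is $0 \le (\ell-j)e + b < e\ell$, valid because $0 \le \ell - j \le \ell - 1$ (here we use $1 \le j \le \ell$) and $0 \le b \le e-1$. This exhibits $a$ and $(\ell-j)e+b$ as the quotient and remainder of $\uu_j(x)$ upon division by $e\ell$, and simultaneously shows $\lfloor \uu_j(x)/e \rfloor = a\ell + (\ell-j)$. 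I would establish this normal form first and then read off each assertion from it.

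Parts (1)--(3) then follow immediately. For (1), both $ae\ell$ and $(\ell-j)e$ are divisible by $e$, so $\uu_j(x) \equiv_e b \equiv_e x$. For the injectivity in (2), if $\uu_j(x) = \uu_j(y)$ then comparing the two normal forms and invoking uniqueness of quotient and remainder modulo $e\ell$ forces the respective values of $a$ and $b$ to agree, whence $x = y$; for the sign statement I would note that $x \ge 0$ if and only if $a \ge 0$ (using $0 \le b < e$) and likewise $\uu_j(x) \ge 0$ if and only if $a \ge 0$ (using $0 \le (\ell-j)e + b < e\ell$), so the two conditions coincide. Part (3) is just the floor computation above: $\lfloor \uu_j(x)/e\rfloor = a\ell + (\ell-j) \equiv_\ell \ell - j$ gives the inclusion $\uu_j(\Z) \subseteq \{y \in \Z : \lfloor y/e\rfloor \equiv_\ell \ell-j\}$, and the reverse inclusion comes from writing any such $y$ as $y = ce + b$ with $0 \le b < e$, solving $c = a\ell + (\ell-j)$ for $a$, and checking $\uu_j(ae+b) = y$.

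Part (4) is then a counting observation: as $j$ runs over $\{1,\dotsc,\ell\}$ the values $\ell - j$ run over $\{0,1,\dotsc,\ell-1\}$, a complete set of residues modulo $\ell$, so the sets described in (3) partition $\Z$ according to the residue of $\lfloor y/e\rfloor$ modulo $\ell$. For part (5), the first identity is immediate from the normal form, since $\uu_j(x)$ and $\uu_{j+1}(x)$ share the same $a$ and $b$ and differ only in the term $(\ell-j)e$ versus $(\ell-j-1)e$, a difference of exactly $e$. The wrap-around identity $\uu_\ell(x) - e = \uu_1(x-e)$ is the one place needing a little care: I would use that $x - e \equiv_e x$, so $\overline{x-e} = b$ while its $e$-quotient drops from $a$ to $a-1$; substituting gives $\uu_1(x-e) = (a-1)e\ell + (\ell-1)e + b = ae\ell + b - e = \uu_\ell(x) - e$.

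There is no genuine obstacle here: every assertion is a direct consequence of the normal form, so the only real difficulty is bookkeeping. The points to keep in view are the ranges $0 \le b < e$ and $0 \le (\ell-j)e + b < e\ell$, which guarantee that the relevant quotients and remainders are genuinely unique, and the fact that subtracting $e$ from $x$ leaves $\bar{x}$ unchanged while decrementing its $e$-quotient, which is exactly what makes the cyclic case of (5) close up correctly.
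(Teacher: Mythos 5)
Your proof is correct, and every step checks out: the normal form $\uu_j(x) = ae\ell + (\ell-j)e + b$ together with the bound $0 \le (\ell-j)e + b < e\ell$ does deliver all five parts exactly as you describe. The paper itself offers no proof of this lemma (it only records that the properties ``can be easily verified''), and your division-with-remainder argument is precisely the routine verification the authors intend, so there is nothing to contrast.
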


In \cite{Uglov}, Uglov defined a map $\UU\ (= \UU_{\ell, e}) : \BB^{\ell} \to \BB$ as follows:
$$
\UU(\B^{(1)},\dotsc,\B^{(\ell)}) := \bigcup_{j=1}^{\ell}
\uu_j(\B^{(j)}).
$$
The best way to understand $\UU$ is via the abaci.
First we stack the $\infty$-abacus displays of $\B^{(j)}$'s on top of each other, with the display of $\B^{(1)}$ at the bottom and that of $\B^{(\ell)}$ at the top.
Next, we cut up this stacked $\infty$-abaci into sections with positions $\{ ae, ae+1, \dotsc, ae + e-1 \}$ ($a \in \Z$), and put the section with positions $\{ ae, ae+1, \dotsc, ae + e-1 \}$ on top of that with positions $\{ (a+1)e, (a+1)e+1, \dotsc, (a+1)e + e-1 \}$ (thus the section of the $\infty$-abacus of $\B^{(1)}$ with positions $\{ ae, ae+1, \dotsc, ae + e-1 \}$ is placed directly on top of the section of the $\infty$-abacus of $\B^{(\ell)}$ with positions $\{ (a+1)e, (a+1)e+1, \dotsc, (a+1)e + e-1 \}$), and relabel the rows of this $e$-abacus by $\Z$ in an ascending order from top to bottom, with $0$ labelling the section of the $\infty$-abacus of $\B^{(\ell)}$ with positions $\{ 0, 1, \dotsc, e-1 \}$.
This yields the $e$-abacus display of $\UU(\B^{(1)},\dotsc,\B^{(\ell)})$.

\begin{eg}
Let $\B_1 = \{-1, -3, -4, \dotsc\}$, $\B_2 = \{2,1, -1, -2, \dotsc\}$, and $\B_3 = \{2, -1, -2, -3 \dotsc\}$.
\begin{center}
\begin{tikzpicture}[scale = 0.5]
\node [left] at (-5,1) {$\B_3$};
\node [left] at (-5,2) {$\B_2$};
\node [left] at (-5,3) {$\B_1$};

\foreach \y in {1,2,3}
\foreach \x in {-4,6}
\node at (\x,\y) {$\dotsm$};

\foreach \y in {1,2,3}
\foreach \x in {-1,-3}
\draw [radius=2mm, fill=black] (\x,\y) circle;

\foreach \y in {1,2,3}
\draw (-0.15, \y) -- (0.15, \y);

\draw (-2.15, 1) -- (-1.85, 1);
\draw [radius=2mm, fill=black] (-2,2) circle;
\draw [radius=2mm, fill=black] (-2,3) circle;

\draw (0.85, 1) -- (1.15, 1);
\draw [radius=2mm, fill=black] (1,2) circle;
\draw (0.85, 3) -- (1.15, 3);

\draw (1.85, 1) -- (2.15, 1);
\draw [radius=2mm, fill=black] (2,2) circle;
\draw [radius=2mm, fill=black] (2,3) circle;

\foreach \y in {1,2,3}
\foreach \x in {3,4,5}
\draw (\x - 0.15, \y) -- (\x + 0.15, \y);

\draw [dashed] (-0.45,0) -- (-0.45, 4);
\foreach \x in {-3.45,2.55, 5.325}
\draw [dashed] (\x,0.5) -- (\x, 3.5);

\draw[->] (7.5,2) -- (9.5,2);

\foreach \x in {11,12,13}
\foreach \y in {6,-1.2}
\node at (\x,\y) {$\vdots$};

\foreach \y in {3,4,5}
\foreach \x in {11,13}
\draw [radius=2mm, fill=black] (\x,\y) circle;

\draw (11.85, 3) -- (12.15, 3);
\draw [radius=2mm, fill=black] (12,4) circle;
\draw [radius=2mm, fill=black] (12,5) circle;

\draw[dashed] (10.5,2.5) -- (13.5,2.5);

\foreach \y in {0,1,2}
\draw (10.85, \y) -- (11.15, \y);

\draw (11.85, 0) -- (12.15, 0);
\draw [radius=2mm, fill=black] (12,1) circle;
\draw (11.85, 2) -- (12.15, 2);

\draw (12.85, 0) -- (13.15, 0);
\draw [radius=2mm, fill=black] (13,1) circle;
\draw [radius=2mm, fill=black] (13,2) circle;

\node [right] at (14, 2.5) {$\UU_3(\B_1,\B_2,\B_3)$};

\end{tikzpicture}
\end{center}
Thus, $\UU_3(\B_1,\B_2,\B_3) = \{5,4,2,-1,-3,-4,-5,\dotsc\}$.
\end{eg}

From the abacus description of $\UU$, it is not difficult to see that the map $\UU$ gives a bijection between the set of $\ell$-tuples of $\beta$-sets and the set of $\beta$-sets.
Furthermore, the following lemma holds:

\begin{lem} \label{L:}
Let $\B^{(1)}, \dotsc, \B^{(\ell)}$ be $\beta$-sets.
\begin{enumerate}
\item For each $k \in \Z$, we have
$$\UU((\B^{(1)})^{+ke},\dotsc, (\B^{(\ell)})^{+ke}) = (\UU(\B^{(1)},\dotsc, \B^{(\ell)}))^{+ke\ell }.$$

\item Let $\quot(\UU(\B^{(1)},\dotsc,\B^{(\ell)})) = (\B_0,\dotsc, \B_{e-1})$ and $\quot(\B^{(j)}) = (\B^{(j)}_0,\dotsc, \B^{(j)}_{e-1})$ for each $j\in \{ 1,\dotsc, \ell\}$.  Then
    $$\fs(\B_i) = \sum_{j=1}^{\ell} \fs(\B^{(j)}_i)$$ for all $i \in \Z/e\Z$.
    In particular,
    $$\fs(\UU(\B^{(1)},\dotsc,\B^{(\ell)})) = \sum_{i \in \Z/e\Z}\, \sum_{j=1}^{\ell} \fs(\B^{(j)}_i) = \sum_{j=1}^{\ell} \fs(\B^{(j)}).$$

\item For each $i \in \Z/e\Z$, we have
$$
\hub_i(\UU(\B^{(1)},\dotsc, \B^{(\ell)})) = \sum_{j=1}^{\ell} \hub_i(\B^{(j)})  + (\ell -1) \delta_{i0}
$$
(where $\delta_{i0}$ equals $1$ when $i=0$, and $0$ otherwise, as before).
\end{enumerate}
\end{lem}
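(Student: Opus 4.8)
The plan is to dispatch the three parts in order: part (1) by a direct computation with the individual maps $\uu_j$, part (2) by pinning down the runner-$i$ beads of $\UU(\B^{(1)},\dotsc,\B^{(\ell)})$ and reading off the charge, and part (3) as a formal consequence of (2) together with Lemma~\ref{L:hub}(2).

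For part (1), the key observation is that each $\uu_j$ intertwines the shift by $e$ on the source with the shift by $e\ell$ on the target. Writing $x = ae + b$ with $0 \le b < e$, we have $x + ke = (a+k)e + b$, so that $\uu_j(x + ke) = (a+k)e\ell + (\ell-j)e + b = \uu_j(x) + ke\ell$. Hence $\uu_j((\B^{(j)})^{+ke}) = (\uu_j(\B^{(j)}))^{+ke\ell}$ for every $j$, and taking the union over $j \in \{1,\dotsc,\ell\}$ yields the claim.

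For part (2), I would first identify the runner $i$ of $\UU(\B^{(1)},\dotsc,\B^{(\ell)})$ explicitly. Since $\uu_j(x) \equiv_e x$ by Lemma~\ref{L:Uglov map}(1), the elements of $\UU(\B^{(1)},\dotsc,\B^{(\ell)})$ congruent to $i$ modulo $e$ are exactly the $\uu_j(x)$ with $x \in \B^{(j)}$ and $x \equiv_e i$. Identifying $i$ with its representative in $\{0,\dotsc,e-1\}$ and writing such an $x$ as $x = ce + i$ (so that $c \in \B^{(j)}_i$), one computes $\uu_j(x) = ce\ell + (\ell-j)e + i$, whose label on runner $i$ is $(\uu_j(x) - i)/e = c\ell + (\ell-j)$. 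Therefore
$$\B_i = \{\, c\ell + (\ell-j) \mid 1 \le j \le \ell,\ c \in \B^{(j)}_i \,\}.$$
The charge $\fs(\B_i)$ can now be read off directly: the maps $c \mapsto c\ell + (\ell-j)$ are injective, satisfy $c\ell + (\ell-j) \ge 0$ if and only if $c \ge 0$ (as $0 \le \ell-j \le \ell-1$), and have images equal to the residue classes $\ell-j$ modulo $\ell$, which partition $\Z$. Consequently $|\B_i \cap \Z_{\ge 0}| = \sum_{j} |\B^{(j)}_i \cap \Z_{\ge 0}|$ and $|\Z_{<0} \setminus \B_i| = \sum_{j} |\Z_{<0} \setminus \B^{(j)}_i|$; subtracting gives $\fs(\B_i) = \sum_{j} \fs(\B^{(j)}_i)$. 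The ``in particular'' statement then follows by summing over $i \in \Z/e\Z$ and applying \eqref{E:charge} both to $\UU(\B^{(1)},\dotsc,\B^{(\ell)})$ and to each $\B^{(j)}$.

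For part (3), I would combine part (2) with Lemma~\ref{L:hub}(2). Applying the latter to $\B := \UU(\B^{(1)},\dotsc,\B^{(\ell)})$ gives $\hub_i(\B) = \fs(\B_i) - \fs(\B_{i-1}) - \delta_{i0}$; substituting the values from part (2) and then using Lemma~\ref{L:hub}(2) in reverse for each $\B^{(j)}$, namely $\fs(\B^{(j)}_i) - \fs(\B^{(j)}_{i-1}) = \hub_i(\B^{(j)}) + \delta_{i0}$, we obtain
$$\hub_i(\B) = \sum_{j=1}^{\ell} \bigl(\hub_i(\B^{(j)}) + \delta_{i0}\bigr) - \delta_{i0} = \sum_{j=1}^{\ell} \hub_i(\B^{(j)}) + (\ell-1)\delta_{i0}.$$
I expect the main obstacle to be part (2): the bookkeeping required to locate the runner-$i$ beads of $\UU(\B^{(1)},\dotsc,\B^{(\ell)})$ precisely and to verify the sign- and partition-properties of the induced maps $c \mapsto c\ell + (\ell-j)$. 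Once part (2) is established, parts (1) and (3) are routine, the former being a one-line computation and the latter a purely formal manipulation (with $i-1$ read modulo $e$ when $i = 0$).
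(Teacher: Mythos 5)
Your proposal is correct, and parts (1) and (3) coincide with the paper's argument: the same computation $\uu_j(x+ke) = \uu_j(x) + ke\ell$ for part (1), and the same formal manipulation via Lemma \ref{L:hub}(2) for part (3).

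Part (2) is where you genuinely diverge. The paper first treats the special case $\Z_{<0} \subseteq \B^{(j)}$ for all $j$: under that hypothesis $\Z_{<0} \subseteq \B_i$ as well, so the charge reduces to the single count $\fs(\B_i) = |\B_i \cap \Z_{\geq 0}|$, which is then matched summand-by-summand with $\sum_j |\B^{(j)}_i \cap \Z_{\geq 0}|$. The general case is afterwards recovered by shifting: one picks $k$ large enough that $(\B^{(j)})^{+ke} \supseteq \Z_{<0}$, applies part (1) together with \eqref{E:quot} to track how the $e$-quotients move under the shift, and deduces the identity from the normalized case. You instead compute the charge directly from its two-term definition $\fs(\B_i) = |\B_i \cap \Z_{\geq 0}| - |\Z_{<0} \setminus \B_i|$, exploiting the explicit form $\B_i = \{\, c\ell + (\ell-j) \mid 1 \leq j \leq \ell,\ c \in \B^{(j)}_i \,\}$ and the three properties of the maps $c \mapsto c\ell + (\ell-j)$ (injectivity, sign preservation, images partitioning $\Z$ into residue classes modulo $\ell$). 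This handles arbitrary $\beta$-sets in one pass, with no normalization and no appeal to part (1) or \eqref{E:quot}; in effect you are observing that $\quot$ applied to $\UU(\B^{(1)},\dotsc,\B^{(\ell)})$ interleaves the $\ell$ quotient components $\B^{(1)}_i,\dotsc,\B^{(\ell)}_i$ into a single $\ell$-runner picture, and that charges are additive under such interleaving. What the paper's route buys is a lighter counting step (only nonnegative beads need to be counted, since the negative side is full by assumption); what your route buys is self-containedness and the sharper structural statement about $\B_i$, which makes the sign bookkeeping transparent. Both are complete and correct.
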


\begin{proof} \hfill
\begin{enumerate}
\item
This follows since for each $j \in \{1,\dotsc, \ell\}$, $\uu_j (x+ke) = \uu_j(x) + ke\ell$ for all $x \in \Z$, so that $\uu_j((\B^{(j)})^{+ke}) = (\uu_j(\B^{(j)}))^{+ke \ell }$.

\item
Assume first that $\Z_{< 0} \subseteq \B^{(j)}$ for all $j \in \{ 1,\dotsc, \ell \}$.
Then $\Z_{<0} \subseteq \B^{(j)}_i$ for all $i \in \Z/e\Z$ and $j \in \{ 1,\dotsc, \ell \}$.
We have
\begin{align*}
\B_i &= \left\{ \tfrac{x-i}{e} \mid x \in \UU(\B^{(1)},\dotsc, \B^{(\ell)}),\ x\equiv_e i \right\} \\
&= \left\{ \tfrac{x-i}{e} \mid x \in \bigcup_{j=1}^{\ell} \uu_j(\B^{(j)}),\ x \equiv_e i \right\} \\
&= \bigcup_{j=1}^{\ell} \left\{ \tfrac{\uu_j(x_j)-i}{e} \mid x_j \in \B^{(j)},\ x_j \equiv_e i \right\} \quad \text{(disjoint union)}
\end{align*}
by Lemma \ref{L:Uglov map}(1,4). Consequently, since $\Z_{< 0} \subseteq \B^{(j)}$ for all $j$, 
we see that $\Z_{<0} \subseteq \B_i$.
Thus
\begin{align*}
\fs(\B_i)
= |\B_i \cap \Z_{\geq 0}|
&= | \bigcup_{j=1}^{\ell} \{ \tfrac{\uu_j(x_j)-i}{e} \mid x_j \in \B^{(j)},\ x_j \equiv_e i,\ \tfrac{\uu_j(x_j)-i}{e} \geq 0 \}| \\
&= \sum_{j=1}^{\ell} |\{ x \in \B^{(j)} \mid x \equiv_e i,\ x \geq 0 \}| \\
&= \sum_{j=1}^{\ell} |\{ y \in \B^{(j)}_i \mid y \geq 0 \}|
= \sum_{j=1}^{\ell} |\B^{(j)}_i \cap \Z_{\geq 0}|
= \sum_{j=1}^{\ell} \fs(\B^{(j)}_i)
\end{align*}
as required.

For general $\B^{(j)}$'s,
let $k \in \Z^+$ be large enough so that $\mathsf{C}^{(j)} := (\B^{(j)})^{+ke} \supseteq \Z_{<0}$ for each $j \in \{1,\dotsc, \ell\}$.
Then  $\quot(\mathsf{C}^{(j)}) = ((\B^{(j)}_0)^{+k},\dotsc, (\B^{(j)}_{e-1})^{+k})$ by \eqref{E:quot}.
Furthermore, by part (1), we have
$$
\UU(\mathsf{C}^{(1)},\dotsc, \mathsf{C}^{(\ell)}) = \UU((\B^{(1)})^{+ke},\dotsc, (\B^{(\ell)})^{+ke}) = (\UU(\B^{(1)}, \dotsc, \B^{(\ell)}))^{+ke\ell},
$$
so that
$$
\quot(\UU(\mathsf{C}^{(1)},\dotsc, \mathsf{C}^{(\ell)})) = \quot ((\UU(\B^{(1)}, \dotsc, \B^{(\ell)}))^{+ke\ell}) = (\B_0^{+k\ell},\dotsc, \B_{e-1}^{+k\ell})
$$
by \eqref{E:quot}.
Applying what we have shown in the last paragraph to the $i$-th component of the $e$-quotient of $\UU( \mathsf{C}^{(1)},\dotsc, \mathsf{C}^{(\ell)})$, we get
$$
\fs((\B_i)^{+k\ell}) = \sum_{j=1}^{\ell} \fs((\B^{(j)}_i)^{+k}),
$$
which yields $\fs(\B_i) = \sum_{j=1}^{\ell} \fs(\B^{(j)}_i)$ as desired.

The final assertion follows since $\fs(\UU(\B^{(1)},\dotsc, \B^{(\ell)})) = \sum_{i \in \Z/e\Z} \fs(\B_i)$ and $\fs(\B^{(j)}) = \sum_{i \in \Z/e\Z} \fs(\B^{(j)}_i)$ by \eqref{E:charge}.

\item Keep the notations used in part (2). Then
\begin{alignat*}{2}
\hub_i(\UU(\B^{(1)},\dotsc, \B^{(\ell)}))  &= \fs(\B_i) - \fs(\B_{i-1}) - \delta_{i0} &\qquad& 
\\
&= \sum_{j=1}^{\ell} \left(\fs(\B^{(j)}_i) - \fs(\B^{(j)}_{i-1})\right) - \delta_{i0} &\qquad&
\\
&= \sum_{j=1}^{\ell} \hub_i(\B^{(j)}) + (\ell-1)\delta_{i0}
\end{alignat*}
by Lemma \ref{L:hub}(2) and part (2).
\end{enumerate}
\end{proof}

\subsection{Place permutation action of the symmetric group}

Let $m \in \Z^+$.  The symmetric group on $m$ letters is denoted by $\mathfrak{S}_m$.  We view $\mathfrak{S}_m$ as a group of functions acting on $\{1,2,\dotsc,m\}$ so that we compose from right to left; for example, $(1,2)(2,3) = (1,2,3)$.  This is a Coxeter group of type $A_{m-1}$, with Coxeter generators $\cs_a = (a,a+1)$ for $1 \leq a \leq m-1$.

Given any nonempty set $X$, $\sym_{m}$ has a natural right place permutation action on $X^{m}$ via
$$
(x_1,\dotsc, x_{m})^{\sigma} = (x_{\sigma(1)},\dotsc, x_{\sigma(m)})
$$
for $\sigma \in \sym_m$ and $x_1,\dotsc, x_m \in X$.

We record a simple lemma.

\begin{lem} \label{L:sym}
Let $X$ be a totally ordered set, and let $(x_1,\dotsc, x_{m}) \in X^{m}$ be an ascending sequence.  If $(x_1,\dotsc, x_{m})^{\sigma}$ is also ascending for some $\sigma\in \mathfrak{S}_{m}$, then $(x_1,\dotsc, x_{\ell})^{\sigma} = (x_1,\dotsc, x_{\ell})$ and
$\sigma \in \left< \cs_a \mid x_a = x_{a+1} \right>$.
\end{lem}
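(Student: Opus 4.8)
The plan is to recognise that the stabiliser of a weakly increasing tuple under the place permutation action is a Young subgroup, and that this Young subgroup is generated precisely by the adjacent transpositions named in the statement. (I read ``ascending'' as weakly increasing, which is the general case; if one insisted on strict monotonicity the named generating set would be trivial and the conclusion would reduce to $\sigma = \mathrm{id}$.)

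First I would establish the identity $(x_1,\dotsc,x_{m})^{\sigma} = (x_1,\dotsc,x_{m})$. Both $(x_1,\dotsc,x_{m})$ and its image $(x_{\sigma(1)},\dotsc,x_{\sigma(m)})$ are weakly increasing by hypothesis, and they are rearrangements of one and the same multiset $\{x_1,\dotsc,x_{m}\}$. Since a finite multiset of elements of a totally ordered set has a \emph{unique} weakly increasing arrangement, the two tuples must coincide; that is, $x_{\sigma(a)} = x_a$ for every $a \in \{1,\dotsc,m\}$. This is the first assertion.

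For the second assertion, I would partition $\{1,\dotsc,m\}$ into the blocks $B_1,\dotsc,B_r$ on which the sequence is constant, i.e. the equivalence classes of the relation $a \sim b$ iff $x_a = x_b$. Because $x_1 \le \dotsb \le x_{m}$ is monotone, each block $B_k$ is an interval of consecutive integers, and two consecutive indices $a, a+1$ lie in a common block precisely when $x_a = x_{a+1}$. The identity $x_{\sigma(a)} = x_a$ just proved says that $\sigma$ preserves each block setwise, so $\sigma$ lies in the Young subgroup $\sym_{B_1} \times \dotsb \times \sym_{B_r}$. Finally, the symmetric group on a consecutive interval is generated by its adjacent transpositions, so $\sym_{B_k} = \langle \cs_a \mid a, a+1 \in B_k\rangle$; ranging over all $k$, these generators are exactly the $\cs_a$ with $x_a = x_{a+1}$. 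Hence $\sigma \in \langle \cs_a \mid x_a = x_{a+1}\rangle$, as claimed.

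The only step demanding any care is the observation that the blocks are intervals and that the generating adjacent transpositions are exactly those indexed by equalities $x_a = x_{a+1}$; both are immediate consequences of monotonicity. The remaining ingredient---that the symmetric group on $\{a, a+1, \dotsc, b\}$ is generated by $\cs_a, \dotsc, \cs_{b-1}$---is the standard Coxeter presentation of the symmetric group and requires no separate argument.
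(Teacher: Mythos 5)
Your proof is correct and follows essentially the same route as the paper's: both argue that two ascending rearrangements of the same multiset must coincide, and then deduce that $\sigma$ permutes only within the blocks of equal entries, hence lies in the subgroup generated by the $\cs_a$ with $x_a = x_{a+1}$. The paper states this in two sentences; you have merely filled in the details (blocks are intervals, the Young subgroup is generated by the named adjacent transpositions), which the paper leaves implicit.
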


\begin{proof}
Since $(x_1,\dotsc, x_{m})^{\sigma}$ is a rearrangement of $(x_1,\dotsc, x_{m})$, if both sequences are ascending, then they must be equal.  Consequently, $\sigma$ only permutes within each subset of $x_a$'s which are equal, and the lemma follows.
\end{proof}

\subsection{The extended affine Weyl group $\EW_m$} \label{SS:Weyl group action}

Let $\{ \Be_1, \dotsc, \Be_m\}$ denote the standard $\Z$-basis for the free (left) $\Z$-module $\Z^m$.
The right place permutation action of $\mathfrak{S}_m$ on $\Z^m$ induces the semidirect product $\EW_m = \Z^m \rtimes \mathfrak{S}_m$, which is an extended affine Weyl group of type $A^{(1)}_{m-1}$.
It contains the affine Weyl group $\AW_m$ of type $\widetilde{A}_{m-1}$ generated by $\{ \cs_0,\cs_1, \dotsc, \cs_{m-1}\}$, where $\cs_a = (a,a+1) \in \sym_m$ for $1 \leq a \leq m-1$ as before, and $\cs_0 = (\Be_1 - \Be_m)(1,m)$.

We shall use two actions of the extended affine Weyl group in this paper.  The first is a right action of $\EW_{\ell}$ on $\Z^{\ell}$ and $\PP^{\ell}$  via
\begin{align*}
\bt^{\sigma \bu} = \bt^{\sigma} + e\bu \quad \text {and} \quad
\bl^{\sigma \bu} = \bl^{\sigma}
\end{align*}
for $\bl \in \PP^{\ell}$, $\bt, \bu \in \Z^{\ell}$ and $\sigma \in \sym_{\ell}$, which further induces a right action on $\PP^{\ell}\times \Z^{\ell}$.
We write $x^{\EW_{\ell}}$ for the orbit of $x$ (for $x$ in $\Z^{\ell}$ or $\PP^{\ell}$ or $\PP^{\ell}\times \Z^{\ell}$), and note that $\res^{\bt}(\bl)$ and $\Hub^{\bt}(\bl)$ are invariant under this action; i.e.\
\begin{equation}
\res^{\bt^w}(\bl^w) = \res^{\bt}(\bl) \quad \text{and} \quad \Hub^{\bt^w}(\bl^w) = \Hub^{\bt}(\bl) \label{E:res-hub}
\end{equation}
for all $w \in \EW_{\ell}$.

The second is a left action $\DDot{k}$ of $\EW_e$ on $\Z$ (for $k \in \Z^+$) defined by
\begin{alignat*}{2}
\cs_i \DDot{k} x &=
\begin{cases}
x-1, &\text{if } x \equiv_e i \\
x+1, &\text{if } x \equiv_e i-1 \\
x, &\text{otherwise}
\end{cases}
&\qquad & (i \in \{ 1,\dotsc, e-1 \}); \\
\Be_j \DDot{k} x &=
\begin{cases}
x+k e, &\text{if } x \equiv_e j-1 \\
x, &\text{otherwise}
\end{cases}
&& (j \in \{ 1,\dotsc, e\}).
\end{alignat*}
This action is easily described using the $e$-abacus:
$\cs_i$ ($1 \leq i \leq e-1$) swaps the runners $i-1$ and $i$, while $\Be_j$ ($1 \leq j \leq e$) moves each position on runner $j-1$ down $k$ positions.
We note that $\cs_i \DDot{k} - = \cs_i \DDot{1} -$ for all $i \in \{1,\dotsc, e-1\}$ and $k \in \Z^+$, while $\cs_0 \DDot{1} - $ has a similar description as $\cs_i \DDot{1} -$ for $i \in \{1,\dotsc, e-1\}$, i.e.
$$
\cs_0 \DDot{1} x =
\begin{cases}
x-1, &\text{if } x \equiv_e 0; \\
x+1, &\text{if } x \equiv_e e-1; \\
x, &\text{otherwise}.
\end{cases}
$$

We extend this left action $\DDot{k}$ to the powerset $\mathcal{P}(\Z)$ of $\Z$, i.e.\
$g \DDot{k} X = \{ g \DDot{k} x \mid x \in X \}$ for $X \subseteq \Z$.  The set $\BB$ of $\beta$-sets is invariant under this action.
Furthermore, for each $i \in \Z/e\Z$, $\cs_i \DDot{k} -$ leaves the charge and the $e$-weight of each $\beta$-set unchanged, and commutes with taking $e$-core, i.e.\
\begin{align}
\fs(\cs_i \DDot{k} \B) &= \fs(\B); \notag \\
\wt(\cs_i \DDot{k} \B) &= \wt(\B); \label{E:cs_i-wt} \\
\core(\cs_i \DDot{k} \B) &= \cs_i \DDot{k} \core(\B). \label{E:cs_i-core}
\end{align}
In addition, we note also that
\begin{equation}
\cs_i \DDot{k} (\B^{+me}) = (\cs_i \DDot{k} \B)^{+me} \label{E:cs+}
\end{equation}
for all $\B \in \BB$ and $m \in \Z$.

We further extend this left action to $(\mathcal{P}(\Z))^{m}$ for any $m \in \mathbb{Z}^+$ by acting on each and every component.
From the abacus description of $\DDot{k}$, it is easy to see that for $\beta$-sets $\B^{(1)},\dotsc, \B^{(\ell)}$, we have
\begin{equation}
\UU(\cs_i \DDot{1} (\B^{(1)},\dotsc, \B^{(\ell)})) = \cs_i \DDot{\ell} \UU(\B^{(1)},\dotsc, \B^{(\ell)}) \label{E:cs_i-U}
\end{equation}
for all $i \in \Z/e\Z$.

For $\bl = (\lambda^{(1)},\dotsc, \lambda^{(\ell)}) \in \PP^{\ell}$, $\bt = (t_1,\dotsc, t_{\ell}) \in \Z^{\ell}$ and $i \in \Z/e\Z$, we have
\begin{align*}
\cs_i \DDot{1} \beta_{\bt}(\bl) = (\cs_i \DDot{1} \beta_{t_1}(\lambda^{(1)}), \dotsc, \cs_i \DDot{1} \beta_{t_{\ell}}(\lambda^{(\ell)})).
\end{align*}
Since $\fs(\cs_i \DDot{1} \beta_{t_j}(\lambda^{(j)})) = \fs(\beta_{t_j}(\lambda^{(j)})) = t_j$ for all $j \in \{1,\dotsc, \ell\}$, we have
$\cs_i \DDot{1} \beta_{t_j}(\lambda^{(j)}) = \beta_{t_j}(\mu^{(j)})$ for a unique $\mu^{(j)} \in \PP^{\ell}$.
Writing $\cs_i \DDot{\bt} \bl$ for $\bm := (\mu^{(1)},\dotsc, \mu^{(\ell)})$, we get a left action $\DDot{\bt}$ of $\AW_e$  on $\PP^{\ell}$, satisfying
$$
\beta_{\bt}(\cs_i \DDot{\bt} \bl) = \cs_i \DDot{1} \beta_{\bt}(\bl),$$
which further induces a left action $\DDot{}$ of $\AW_e$ on $\PP^{\ell} \times \Z^{\ell}$ via $\cs_i \DDot{} (\bl;\bt) = (\cs_i \DDot{\bt} \bl; \bt)$.
Note that $\cs_i \DDot{\bt} \bl$ is the multipartition obtained from $\bl$ by removing all its removable $i$-nodes and adding all its addable $i$-nodes, both relative to $\bt$.

Now, for $\sigma \in \sym_m$ and $\bu = (u_1,\dotsc, u_{\ell}) \in \Z^{\ell}$, we have, by \eqref{E:beta+} and \eqref{E:cs+},
\begin{align*}
\cs_i \DDot{1} \beta_{\bt^{\sigma \bu}}(\bl^{\sigma})
&= (\cs_i \DDot{1} \beta_{t_{\sigma(1)} + eu_1} (\lambda^{\sigma(1)}), \dotsc, \cs_i \DDot{1} \beta_{t_{\sigma(\ell)} + eu_{\ell}} (\lambda^{\sigma(\ell)})) \\
&= ((\cs_i \DDot{1} \beta_{t_{\sigma(1)}} (\lambda^{\sigma(1)}))^{+eu_1}, \dotsc, (\cs_i \DDot{1} \beta_{t_{\sigma(\ell)}} (\lambda^{\sigma(\ell)}))^{+eu_{\ell}}) \\
&= ((\beta_{t_{\sigma(1)}}(\mu^{(\sigma(1))}))^{+eu_1}, \dotsc, (\beta_{t_{\sigma(\ell)}}(\mu^{(\sigma(\ell))}))^{+eu_{\ell}}) \\
&= (\beta_{t_{\sigma(1)}+eu_1}(\mu^{(\sigma(1))}), \dotsc, \beta_{t_{\sigma(\ell)}+eu_{\ell}}(\mu^{(\sigma(\ell))})) = \beta_{\bt^{\sigma\bu}}(\bm^{\sigma}).
\end{align*}
Thus
$$
\cs_i \DDot{} ((\bl;\bt)^{\sigma\bu}) = (\cs_i \DDot{\bt^{\sigma\bu}} \bl^{\sigma}; \bt^{\sigma\bu}) = (\bm^{\sigma}; \bt^{\sigma\bu})
=(\bm;\bt)^{\sigma\bu} = (\cs_i \DDot{\bt}\bl; \bt)^{\sigma\bu} = (\cs_i \DDot{} (\bl;\bt))^{\sigma\bu};
$$
i.e.\ the left action $\DDot{}$ of $\AW_e$ on $\PP^{\ell} \times \Z^{\ell}$ commutes with the right action of $\EW_{\ell}$.

\section{Cores and weights of multipartitions} \label{S:core and weight}

In this section, we define the $e$-weight and the $e$-core of a multipartition associated to a multicharge, and study their properties. In particular, we investigate how these change under the right action of the extended affine Weyl group $\EW_{\ell}$.

\begin{Def}
Let $\bl = (\lambda^{(1)},\dotsc, \lambda^{(\ell)}) \in \PP^{\ell}$ and $\bt = (t_1,\dotsc, t_{\ell}) \in \Z^{\ell}$.  Recall that $\beta_{\bt}(\bl) = (\beta_{t_1}(\lambda^{(1)}),\dotsc, \beta_{t_{\ell}}(\lambda^{(\ell)}))$.
\begin{enumerate}

\item We write $\UU(\bl;\bt)$ for $\beta^{-1}(\UU(\beta_{\bt}(\bl)))$. (Thus, $\UU(\bl;\bt) \in \PP$ and $\beta_t(\UU(\bl;\bt)) = \UU(\beta_{\bt}(\bl))$, where $t = \sum_{j=1}^{\ell} t_j$.)

\item We write $\core(\bl;\bt) \in \PP$ and $\wt(\bl;\bt) \in \mathbb{Z}$ for the $e$-core and the $e$-weight of $\UU(\bl;\bt)$ respectively; i.e.\
\begin{alignat*}{2}
\core(\bl;\bt) &:= \core(\UU(\bl;\bt)) &&= \beta^{-1}(\core(\UU(\beta_{\bt}(\bl)))), \\
\wt(\bl;\bt) &:= \wt(\UU(\bl;\bt)) &&= \wt(\UU(\beta_{\bt}(\bl))).
\end{alignat*}
\end{enumerate}
\end{Def}

Recall the right action of the extended affine Weyl group $\EW_{\ell}$ on $\PP^{\ell} \times \Z^{\ell}$ via
$$
(\bl;\bt)^{\sigma\bu} = (\bl^{\sigma}; \bt^{\sigma} +e\bu)
$$
as detailed in Subsection \ref{SS:Weyl group action}.


\begin{prop} \label{P:Uglov-partition}
Assume $\ell \geq 2$, and let $\bl \in \PP^{\ell}$ and $\bt = (t_1,\dotsc, t_{\ell}) \in \Z^{\ell}$.
\begin{enumerate}
\item Let $\rho = (1,2,\dotsc, \ell) \in \mathfrak{S}_{\ell}$.  Then
$$
\UU((\bl; \bt)^{\rho\Be_{\ell}})  = \UU(\bl;\bt).
$$

\item For all $j \in \{1,\dotsc, {\ell}\}$ and $k \in \{1,\dotsc, {\ell}-1 \}$, we have
$$\core((\bl; \bt)^{\Be_j}) = \core(\bl;\bt) = \core((\bl;\bt)^{\cs_k}).$$

\item For all $k \in \{1,\dotsc, {\ell}-1\}$, we have
$$\wt((\bl; \bt)^{\cs_k}) = \wt(\bl;\bt) + t_{k+1} - t_k.$$

\item $$\wt((\bl; \bt)^{\cs_0}) = \wt(\UU(\bl;\bt)) + t_1 - t_{\ell} + e.$$

\item Let $\bone := \sum_{j=1}^{\ell} \mathbf{e}_j$.  Then
$$
\wt(\bl;\bt) = \wt(\bl;\bt + \bone).
$$
\end{enumerate}
\end{prop}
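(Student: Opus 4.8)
The plan is to translate the statement into the language of $\beta$-sets and read off the $e$-quotient of the relevant Uglov image directly from the abacus. Writing $\B^{(j)} = \beta_{t_j}(\lambda^{(j)})$, so that $\beta_{\bt}(\bl) = (\B^{(1)},\dotsc,\B^{(\ell)})$, equation \eqref{E:beta+} gives $\beta_{\bt+\bone}(\bl) = ((\B^{(1)})^{+1},\dotsc,(\B^{(\ell)})^{+1})$. Hence $\wt(\bl;\bt+\bone) = \wt(\UU((\B^{(1)})^{+1},\dotsc,(\B^{(\ell)})^{+1}))$, and the task reduces to comparing this with $\wt(\B)$, where $\B := \UU(\beta_{\bt}(\bl))$. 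Since by \eqref{E:wt-quot} the $e$-weight of a $\beta$-set is the sum of the sizes of the partitions appearing in its $e$-quotient, I would aim to show that $\UU((\B^{(1)})^{+1},\dotsc,(\B^{(\ell)})^{+1})$ and $\B$ have $e$-quotients consisting of the same partitions.

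To do this I would track beads through the abacus description of $\UU$. Using $\uu_j(ae+b) = ae\ell+(\ell-j)e+b$ for $0 \le b < e$, the bead of $\B^{(j)}$ in row $a$, runner $b$ of its own $e$-abacus lands in row $a\ell+\ell-j$, runner $b$ of the $e$-abacus of $\B$; and by Lemma~\ref{L:Uglov map}(4) every bead of $\B$ arises this way from a unique pair $(j,x)$. Passing from $\B^{(j)}$ to $(\B^{(j)})^{+1}$ sends a bead in runner $b$ to runner $b+1$ of the same row when $b < e-1$, and sends a bead in runner $e-1$, row $a$ to runner $0$, row $a+1$. Transporting this through $\UU$, I expect each bead of $\B$ on runners $0,\dotsc,e-2$ to move one runner to the right in the same row, while each bead on runner $e-1$ moves to runner $0$ but $\ell$ rows lower. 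Writing $\quot(\B) = (\B_0,\dotsc,\B_{e-1})$, this would give
$$
\quot\bigl(\UU((\B^{(1)})^{+1},\dotsc,(\B^{(\ell)})^{+1})\bigr) = \bigl((\B_{e-1})^{+\ell},\,\B_0,\,\B_1,\dotsc,\B_{e-2}\bigr).
$$

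Finally, I would compare this with $\quot(\B^{+1}) = ((\B_{e-1})^{+1},\B_0,\dotsc,\B_{e-2})$ (recorded just before \eqref{E:quot}). The two $e$-quotients differ only in their first component, and there only by $(\B_{e-1})^{+\ell}$ versus $(\B_{e-1})^{+1}$; since $\beta^{-1}(\mathsf{C}^{+k}) = \beta^{-1}(\mathsf{C})$ for every $\beta$-set $\mathsf{C}$ and $k \in \Z$ (as $\mathsf{C}^{+k} = \beta_{\fs(\mathsf{C})+k}(\beta^{-1}(\mathsf{C}))$ by \eqref{E:beta+}), these two shifts determine the same partition. Thus by \eqref{E:wt-quot} the two images have equal $e$-weight, and combined with $\wt(\B^{+1}) = \wt(\B)$ from \eqref{E:wtbeta} this yields $\wt(\bl;\bt+\bone) = \wt(\B) = \wt(\bl;\bt)$. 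The one delicate point, and the step I would write out most carefully, is the bead bookkeeping for the runner $e-1$ wrap-around: the $+1$ shift moves such a bead into the next row of its own runner $0$, which the Uglov map then places $\ell$ rows down rather than one, and pinning down this offset is exactly what separates the weight-preserving $+\bone$ shift from a genuine $+1$ shift of the combined $\beta$-set.
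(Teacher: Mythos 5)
Your argument for part (5) is correct, and it is essentially the paper's own proof of that part: the key fact, that if $\quot(\UU(\beta_{\bt}(\bl))) = (\B_0,\dotsc,\B_{e-1})$ then $\quot(\UU(\beta_{\bt+\bone}(\bl))) = ((\B_{e-1})^{+\ell},\B_0,\dotsc,\B_{e-2})$, is exactly what the paper asserts, and your bead bookkeeping through $\uu_j$ (runner $b$ moves to runner $b+1$ in the same row when $b<e-1$, while runner $e-1$ wraps to runner $0$ exactly $\ell$ rows lower) is a valid, indeed more detailed, justification of it. The conclusion then follows from \eqref{E:wt-quot}, since shifting a $\beta$-set does not change the underlying partition.

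The genuine gap is that this is only one of the five assertions: parts (1)--(4) are not addressed at all, and none of them is a routine consequence of your quotient computation. Part (1) needs the relations $\uu_j(x)-e=\uu_{j+1}(x)$ for $j<\ell$ and $\uu_{\ell}(x)-e=\uu_1(x-e)$ from Lemma \ref{L:Uglov map}(5), which give $\UU(\beta_{\bt^{\rho}+e\Be_{\ell}}(\bl^{\rho})) = (\UU(\beta_{\bt}(\bl)))^{+e}$ and hence equality of the associated partitions. Part (2) requires tracking how the rows of the stacked abacus move: under $\Be_j$ every row $r\equiv_{\ell}\ell-j+1$ moves down $\ell$ rows, so each quotient charge $\fs(\B_i)$ increases by $1$ and the $e$-core shifts by $+e$ (giving the same partition), while under $\cs_k$ adjacent rows swap, leaving all quotient charges unchanged; both steps then invoke \eqref{E:quot-of-core}. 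Part (3) is the most delicate point of the proposition: swapping row $r$ with row $r-1$ for all $r\equiv_{\ell}\ell-k$ changes the weight by $|\beta_{t_{k+1}}(\lambda^{(k+1)})\setminus\beta_{t_k}(\lambda^{(k)})| - |\beta_{t_k}(\lambda^{(k)})\setminus\beta_{t_{k+1}}(\lambda^{(k+1)})|$, which equals $t_{k+1}-t_k$ by Lemma \ref{L:beta difference}; this is a genuinely different count from the one you carried out, since it compares beads across two different components rather than within one shifted component. Part (4) then follows formally from (1) and (3) via the identity $\cs_0\rho\Be_{\ell} = \rho\Be_{\ell}\cs_{\ell-1}$ in $\EW_{\ell}$. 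As it stands, your proposal proves part (5) only.
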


\begin{proof} \hfill
\begin{enumerate}
\item
Let $t = \sum_{j=1}^{\ell} t_j$.
We have
\begin{align*}
\UU(\beta_{\bt^{\rho} + e\Be_{\ell}}(\bl^{\rho}))
&= \UU(\beta_{(t_2,\dotsc, t_{\ell}, t_1+e)} (\lambda^{(2)},\dotsc, \lambda^{({\ell})}, \lambda^{(1)})) \\
&= \bigcup_{j=1}^{{\ell}-1} \uu_j(\beta_{t_{j+1}}(\lambda^{(j+1)})) \cup
\uu_{\ell}(\beta_{t_1+e}(\lambda^{(1)}))  \\
&= \bigcup_{j=1}^{{\ell}-1} \{\uu_j(x) : x \in \beta_{t_{j+1}}(\lambda^{(j+1)}) \} \cup \{
\uu_{\ell}(x) : x \in \beta_{t_1+e}(\lambda^{(1)}) \} \\
&= \bigcup_{j=1}^{{\ell}-1} \{ \uu_{j+1}(x) +e : x \in \beta_{t_{j+1}}(\lambda^{(j+1)}) \}
\cup \{
\uu_1(x-e) +e : x-e \in \beta_{t_1}(\lambda^{(1)}) \} \\
&= \left(\bigcup_{j=1}^{\ell} \uu_j(\beta_{t_j}(\lambda^{(j)})) \right)^{+e}
= (\UU(\beta_{\bt}(\bl)))^{+e} = (\beta_t(\UU(\bl;\bt)))^{+e} = \beta_{t+e}(\UU(\bl;\bt)),
\end{align*}
where the fourth and the last equalities follow from Lemma \ref{L:Uglov map}(5) and \eqref{E:beta+} respectively.
Thus
$$ \UU((\bl;\bt)^{\rho\Be_{\ell}}) = \UU(\bl^{\rho}; \bt^{\rho} + e\Be_{\ell}) =
\beta^{-1}(\UU(\beta_{\bt^{\rho} +e\Be_{\ell}}(\bl^{\rho}))) = \beta^{-1}(\beta_{t+e}(\UU(\bl;\bt))) = \UU(\bl; \bt).$$

\item
Let
\begin{align*}
\quot(\UU(\beta_{\bt}(\bl))) &= (\B_0,\dotsc, \B_{e-1}); \\
\quot(\UU(\beta_{\bt^{\Be_j}}(\bl))) &= (\B'_0,\dotsc, \B'_{e-1}); \\
\quot(\UU(\beta_{\bt^{\cs_k}}(\bl^{\cs_k}))) &= (\B''_0,\dotsc, \B''_{e-1}).
\end{align*}
The $e$-abacus display of $\UU(\beta_{\bt^{\Be_j}}(\bl))$ is obtained from that of $\UU(\beta_{\bt}(\bl))$ by moving its row $r$ to row $r+\ell$ for all $r \equiv_{\ell} \ell - j+1$.
Thus $\fs(\B'_i) = \fs(\B_i) + 1$ for all $i \in \Z/e\Z$.
Consequently,
\begin{align*}
\quot(\core(\UU(\beta_{\bt^{\Be_j}}(\bl))))
&= (\beta_{\fs(\B'_0)}(\varnothing), \dotsc, \beta_{\fs(\B'_{e-1})}(\varnothing)) \\
&= (\beta_{\fs(\B_0)+1}(\varnothing), \dotsc, \beta_{\fs(\B_{e-1})+1}(\varnothing)) \\
&= ((\beta_{\fs(\B_0)}(\varnothing))^{+1}, \dotsc, (\beta_{\fs(\B_{e-1})}(\varnothing))^{+1}) \\
&= \quot((\core(\UU(\beta_{\bt}(\bl))))^{+e})
\end{align*}
by \eqref{E:quot-of-core}, \eqref{E:beta+} and \eqref{E:quot}. Hence, $\core(\UU(\beta_{\bt^{\Be_j}}(\bl))) = (\core(\UU(\beta_{\bt}(\bl))))^{+e}$, and so
\begin{align*}
\core((\bl;\bt)^{\Be_j})
= \beta^{-1}(\core(\UU(\beta_{\bt^{\Be_j}}(\bl))))
&= \beta^{-1}((\core (\UU(\beta_{\bt}(\bl))))^{+e}) \\
&= \beta^{-1}(\core(\UU(\beta_{\bt}(\bl)))) = \core(\bl;\bt).
\end{align*}
On the other hand, the $e$-abacus display of $\UU(\beta_{\bt^{\cs_k}}(\bl^{\cs_k}))$ is obtained from that of $\UU(\beta_{\bt}(\bl))$ by swapping its row $r$ with row $r-1$ for all $r \equiv_{\ell} \ell - k$.
Thus, $\fs(\B''_i) = \fs(\B_i)$ for all $i \in \Z/e\Z$, and hence $\core(\UU(\beta_{\bt^{\cs_k}}(\bl^{\cs_k}))) = \core(\UU(\beta_{\bt}(\bl)))$, so that
$$
\core((\bl;\bt)^{\cs_k}) = \beta^{-1}(\core(\UU(\beta_{\bt^{\cs_k}}(\bl^{\cs_k})))) =
\beta^{-1}(\core(\UU(\beta_{\bt}(\bl)))) =
\core(\bl;\bt).
$$

\item
Note that the $e$-abacus display of $\UU(\beta_{\bt^{\cs_k}}(\bl^{\cs_k}))$ is obtained from that of $\UU(\beta_{\bt}(\bl))$ by swapping its row $r$ with row $r-1$ for all $r \equiv_{\ell} \ell -k$.
As such, going from $\UU(\beta_{\bt}(\bl))$ to $\UU(\beta_{\bt^{\cs_k}}(\bl^{\cs_k}))$,
the $e$-weight is increased by $1$ for each occurrence of a vacant position in row $r$ of $\UU(\beta_{\bt}(\bl))$ with a bead directly above it (in row $r-1$), and is decreased by $1$ for each occurrence of a bead in row $r$ of $\UU(\beta_{\bt}(\bl))$ with a vacant position directly above it, for each $r \equiv_{\ell} \ell -k$.
Now each position in row $r$ with $r \equiv_{\ell} \ell - k$ is of the form $\uu_k(x)$ for some $x \in \Z$, and the position directly above it is then $\uu_k(x) -e = \uu_{k+1}(x)$ by Lemma \ref{L:Uglov map}(5).
Thus, the $e$-weight is increased by $1$ for each $x \in \beta_{t_{k+1}}(\lambda^{(k+1)}) \setminus \beta_{t_{k}}(\lambda^{(k)})$ and is decreased by $1$ for each $x \in \beta_{t_{k}}(\lambda^{(k)}) \setminus \beta_{t_{k+1}}(\lambda^{(k+1)})$.  Consequently
\begin{align*}
\wt(\UU(\beta_{\bt^{\cs_k}}(\bl^{\cs_k})))
&=
\wt(\UU(\beta_{\bt}(\bl)))
+ |\beta_{t_{k+1}}(\lambda^{(k+1)}) \setminus \beta_{t_k}(\lambda^{(k)})|
- |\beta_{t_k}(\lambda^{(k)}) \setminus \beta_{t_{k+1}}(\lambda^{(k+1)})| \\
&= \wt(\UU(\beta_{\bt}(\bl))) + t_{k+1} - t_{k}
\end{align*}
by Lemma \ref{L:beta difference}.

\item
Note first that in $\EW_{\ell}$, we have
$
\cs_0 \rho\Be_{\ell} =
\rho\Be_{\ell}\cs_{\ell-1}.
$
Thus
\begin{alignat*}{2}
\wt(\UU((\bl;\bt)^{\cs_0}))
&= \wt(\UU((\bl;\bt)^{\cs_0 \rho\Be_{\ell}})) &\qquad &(\text{by (1)}) \\
&= \wt(\UU((\bl;\bt)^{\rho\Be_{\ell}\cs_{\ell-1}})) &&\\
&= \wt(\UU((\bl;\bt)^{\rho\Be_{\ell}})) + t_1 + e - t_{\ell} & &(\text{by (3) since } \bt^{\rho\Be_{\ell}}=(t_2, \dots, t_{\ell}, t_1+e))\\
&= \wt(\UU((\bl;\bt))) + t_1 + e - t_{\ell} &\qquad &(\text{by (1)}).
\end{alignat*}

\item Let $\quot(\UU(\beta_{\bt}(\bl))) = (\B_0,\dotsc, \B_{e-1})$.  Then $\quot(\UU(\beta_{\bt + \bone}(\bl))) = ((\B_{e-1})^{+\ell}, \B_0,\dotsc, \B_{e-2})$.
    Consequently,
    \begin{align*}
    \wt(\UU(\beta_{\bt+\bone}(\bl))) &= |\beta^{-1}((\B_{e-1})^{+\ell})| + |\beta^{-1}(\B_0)| + \dotsb + |\beta^{-1}(\B_{e-2})| \\
    &= \sum_{i\in \Z/e\Z} |\beta^{-1}(\B_i)| = \wt(\UU(\beta_{\bt}(\bl)))
    \end{align*}
    by \eqref{E:wt-quot} and \eqref{E:beta+}.
\end{enumerate}
\end{proof}


\begin{Def}
Let $\AAnobar$ and $\AAbar$ be the subsets of $\Z^{\ell}$ defined by
\begin{align*}
\AAnobar &:= \{ (t_1,\dotsc, t_{\ell}) \in \Z^{\ell} \mid t_1 \leq t_2 \leq \dotsb \leq t_{\ell} \leq t_1 + e-1 \}; \\
\AAbar &:= \{ (t_1,\dotsc, t_{\ell}) \in \Z^{\ell} \mid t_1 \leq t_2 \leq \dotsb \leq t_{\ell} \leq t_1 + e \}.
\end{align*}
\end{Def}

\begin{Def}
For $a, b \in \Z$ with $0 \leq b \leq \ell-1$, define
$$
\bv(a,b) := (a,\dotsc,a, \overbrace{a+1,\dotsc, a+1}^{b \text{ times}}) \in \Z^{\ell}.
$$
\end{Def}

We note that $(\rho\Be_{\ell})^{a\ell + b} = \rho^b \bv(a,b)$, where $\rho = (1,2,\dotsc, \ell) \in \mathfrak{S}_{\ell}$.

\begin{prop} \label{P:AAbar}
Let $\bt = (t_1,\dotsc, t_{\ell}) \in \Z^{\ell}$, and let $\rho = (1,2,\dotsc, \ell) \in \sym_{\ell}$.
\begin{enumerate}
\item   Then
$\bt \in \AAbar$ if and only if $\bt^{\rho\Be_{\ell}} \in \AAbar$.
\item If $\bt \in \AAnobar$ and $\bt^{\sigma\bv} \in \AAbar$ for some $\sigma \in \mathfrak{S}_{\ell}$ and $\bv \in \Z^{\ell}$, then there exist $a,b \in \Z$ with $0 \leq b \leq \ell -1$ such that:
\begin{itemize}
\item $\bv = \bv(a,b)$;
\item $\sigma\rho^{-b} \in \left< \cs_k \mid t_k = t_{k+1} \right>.$
\end{itemize}
\end{enumerate}
\end{prop}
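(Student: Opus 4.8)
The plan is to dispatch part (1) by a direct verification of the defining inequalities, and to reduce part (2) to Lemma \ref{L:sym} via the factorisation recorded just before the proposition.

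For part (1), I would simply write out $\bt^{\rho\Be_{\ell}} = (t_2, t_3, \dotsc, t_{\ell}, t_1 + e)$ and compare the two membership conditions. Unpacking $(t_2,\dotsc,t_{\ell},t_1+e) \in \AAbar$ gives $t_2 \le \dotsb \le t_{\ell} \le t_1 + e$ together with the wrap-around inequality $t_1 + e \le t_2 + e$, i.e.\ $t_1 \le t_2$; this is visibly equivalent to $t_1 \le t_2 \le \dotsb \le t_{\ell} \le t_1 + e$, which is exactly $\bt \in \AAbar$. This is routine. Iterating the resulting equivalence gives, for every $m \in \Z$, that $\bs \in \AAbar$ if and only if $\bs^{(\rho\Be_{\ell})^m} \in \AAbar$, a fact I will use in part (2).

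For part (2), the first task is to pin down the translation vector $\bv = (v_1,\dotsc,v_{\ell})$. Writing $u_i := t_{\sigma(i)} + e v_i$ so that $\bt^{\sigma\bv} = (u_1,\dotsc,u_{\ell})$, the hypothesis $\bt^{\sigma\bv} \in \AAbar$ gives $u_1 \le \dotsb \le u_{\ell} \le u_1 + e$, while $\bt \in \AAnobar$ gives $|t_{\sigma(i)} - t_{\sigma(j)}| \le e-1$ for all $i,j$. From $u_i \le u_{i+1}$ I obtain $e(v_{i+1} - v_i) \ge t_{\sigma(i)} - t_{\sigma(i+1)} > -e$, so $v_{i+1} \ge v_i$; thus $\bv$ is weakly increasing. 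From $u_{\ell} \le u_1 + e$ I obtain $e(v_{\ell} - v_1) \le e + (t_{\sigma(1)} - t_{\sigma(\ell)}) \le 2e - 1$, so $v_{\ell} - v_1 \le 1$. Hence $\bv$ is weakly increasing with all entries in $\{v_1, v_1+1\}$, which forces $\bv = \bv(a,b)$ with $a = v_1$ and $b$ the number of entries equal to $v_1 + 1$; should this number equal $\ell$, I replace $(a,b)$ by $(a+1,0)$, so I may take $0 \le b \le \ell - 1$. The crucial point here is that the strict bound $t_{\ell} - t_1 \le e-1$ from $\AAnobar$ (rather than the weaker bound from $\AAbar$) is what rules out descents in $\bv$ and caps the spread at $1$.

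It remains to identify $\sigma$ modulo $\langle \cs_k \mid t_k = t_{k+1}\rangle$. Using the semidirect-product multiplication in $\EW_{\ell}$ together with the identity $(\rho\Be_{\ell})^{a\ell+b} = \rho^b\bv(a,b)$ recorded before the proposition, I would verify the factorisation
$$
\sigma\bv = (\sigma\rho^{-b})\,(\rho\Be_{\ell})^{a\ell+b},
$$
where $\sigma\rho^{-b}$ is regarded as a pure permutation with zero translation part. Applying this to $\bt$ gives $\bt^{\sigma\bv} = (\bt^{\sigma\rho^{-b}})^{(\rho\Be_{\ell})^{a\ell+b}}$, so the equivalence established in part (1), iterated $a\ell+b$ times, shows $\bt^{\sigma\rho^{-b}} \in \AAbar$; in particular $\bt^{\sigma\rho^{-b}}$ is ascending. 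Since $\bt$ is also ascending and $\bt^{\sigma\rho^{-b}}$ is a place-permutation of $\bt$, Lemma \ref{L:sym} applies and yields $\sigma\rho^{-b} \in \langle \cs_k \mid t_k = t_{k+1}\rangle$, completing the proof. I expect the determination of $\bv$ to be the main obstacle, namely arranging the monotonicity and the spread-at-most-one bounds to fall out cleanly from the two sets of inequalities; by contrast, the final identification of $\sigma$ is essentially bookkeeping once the factorisation above is set up correctly.
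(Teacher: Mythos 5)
Your proof is correct and follows essentially the same route as the paper: part (1) by direct unpacking of the inequalities, and part (2) by pinning down $\bv = \bv(a,b)$ via the $e-1$ bound coming from $\AAnobar$, then using the factorisation $\sigma\bv = (\sigma\rho^{-b})(\rho\Be_{\ell})^{a\ell+b}$ together with part (1) and Lemma \ref{L:sym} to identify $\sigma\rho^{-b}$. The only (immaterial) difference is in how the block form of $\bv$ is extracted --- you use consecutive differences plus the wrap-around inequality, whereas the paper compares each entry against the first and then shows the set $\{j \mid v_j \neq v_1\}$ is a suffix; both hinge on the same bound $|t_i - t_j| \leq e-1$.
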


\begin{proof} \hfill
\begin{enumerate}
\item
Note first that $\bt^{\rho\Be_{\ell}} = (t_2, \dotsc, t_{\ell}, t_1 + e)$.  We have
\begin{align*}
\bt \in \AAbar \Leftrightarrow t_1 \leq t_2 \leq \dotsb \leq t_{\ell} \leq t_1 + e
&\Leftrightarrow t_2 \leq \dotsb \leq t_{\ell} \leq t_1 + e \leq t_2 + e
\Leftrightarrow \bt^{\rho\Be_{\ell}} \in \AAbar.
\end{align*}

\item
Let $\bv = (v_1,\dotsc,v_{\ell})$ and let $\bu = (u_1,\dotsc, u_{\ell}) = \bt^{\sigma\bv}$.
Let $J_{\bv} = \{ j \mid v_j \ne v_1 \}$.
For each $j \in J_{\bv}$ (so $v_j \ne v_1$), since $\bu \in \AAbar$, we have
$$
[0,\, e] \ni u_{j}-u_1 = (t_{\sigma(j)} + v_je) - (t_{\sigma(1)} + v_1 e) =  (v_j-v_1)e + (t_{\sigma(j)} - t_{\sigma(1)}) ,
$$
forcing $v_j - v_1 =1$ (and $t_{\sigma(1)} \geq t_{\sigma(j)}$) since $|t_{\sigma(j)} - t_{\sigma(1)}| \leq e-1$.
In addition, if there exists $k$ such that $j < k \leq \ell$ and $k \notin J_{\bv}$, then
since $|t_{\sigma(k)} - t_{\sigma(j)}| \leq e-1$, we have
$$0 \leq u_{k} - u_j = (t_{\sigma(k)} + v_1e) - (t_{\sigma(j)} + (v_1+1)e) \leq (e-1) - e \leq -1,$$
a contradiction.
Thus, $\bv = \bv(v_1,|J_{\bv}|)$, and so
$$
\bt^{\sigma\bv} = \bt^{\sigma\bv(v_1, |J_{\bv}|)}
= \bt^{\sigma\rho^{-|J_{\bv}|}(\rho\Be_{\ell})^{v_1 \ell + |J_{\bv}|}}.
$$
Consequently,
$$
\bt^{\sigma\rho^{-|J_{\bv}|}} = (\bt^{\sigma\bv})^{(\rho\Be_{\ell})^{-(v_1 \ell + |J_{\bv}|)}} \in \AAbar
$$
by part (1).  Hence, $\bt^{\sigma\rho^{-|J_{\bv}|}} = \bt$ and
$\sigma\rho^{-|J_{\bv}|} \in \left< \cs_k \mid t_k = t_{k+1} \right>$ by Lemma \ref{L:sym}.
\end{enumerate}
\end{proof}

We are now ready to prove the first main theorem of this paper.

\begin{thm} \label{T:core and weight}
Let $\bl  \in \PP^{\ell}$ and $\bt \in \Z^{\ell}$, and let $(\bm;\bu) \in (\bl;\bt)^{\EW_{\ell}}$.  Then:
\begin{enumerate}
\item $\core(\bm;\bu) = \core(\bl;\bt).$

\item $\wt(\bm;\bu) = \min (\wt((\bl;\bt)^{\EW_{\ell}}))$
if and only if $\bu \in \AAbar$.
\end{enumerate}
\end{thm}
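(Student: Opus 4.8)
The plan is to handle the two parts separately, drawing everything from the change formulas of Proposition~\ref{P:Uglov-partition} together with Proposition~\ref{P:AAbar}.

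For part~(1) I would first recall that $\EW_{\ell} = \Z^{\ell} \rtimes \sym_{\ell}$ is generated by the translations $\Be_1,\dotsc,\Be_{\ell}$ and the transpositions $\cs_1,\dotsc,\cs_{\ell-1}$, since $\sym_{\ell} = \langle \cs_1,\dotsc,\cs_{\ell-1}\rangle$ and $\Z^{\ell} = \langle \Be_1,\dotsc,\Be_{\ell}\rangle$. Proposition~\ref{P:Uglov-partition}(2) asserts that each of these generators preserves the $e$-core of an \emph{arbitrary} pair. Applying the generators one step at a time along any word for $w\in\EW_{\ell}$, with the intermediate pair in the role of $(\bl;\bt)$ at each step, therefore gives $\core((\bl;\bt)^w)=\core(\bl;\bt)$ for every $w$, which is exactly part~(1).

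For part~(2) the numerical input is that, for an orbit element $y=(\bm;\bu)$ with $\bu=(u_1,\dotsc,u_{\ell})$, Proposition~\ref{P:Uglov-partition}(3),(4) give $\wt(y^{\cs_k})-\wt(y)=u_{k+1}-u_k$ for $1\le k\le \ell-1$ and $\wt(y^{\cs_0})-\wt(y)=u_1-u_{\ell}+e$, while Proposition~\ref{P:Uglov-partition}(1) shows that $\pi:=\rho\Be_{\ell}$ preserves the weight. Since the weight is a non-negative integer, the minimum over the orbit is attained. I would first settle the ``only if'' direction by descent: if $\bu\notin\AAbar$ then either $\bu$ is not weakly increasing, so $u_{k+1}-u_k<0$ for some $k$ and $\cs_k$ strictly decreases the weight, or $\bu$ is weakly increasing but $u_{\ell}>u_1+e$, so $\cs_0$ strictly decreases the weight. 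In either case $y$ cannot be a minimizer, so every weight-minimizing orbit element has charge in $\AAbar$.

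For the converse I would show that the weight is constant on the set of orbit elements whose charge lies in $\AAbar$. First, every orbit meets $\AAnobar$: the multiset of residues modulo $e$ is a complete invariant of the charge-orbit (since $\bs$ lies in the orbit of $\bt$ precisely when $s_j\equiv_e t_{\sigma(j)}$ for some $\sigma$), and sorting these residues into $\{0,\dotsc,e-1\}$ yields a representative $y_0$ with charge $\bu_0\in\AAnobar\subseteq\AAbar$. Now let $y=y_0^{w}$ be any orbit element with charge in $\AAbar$, say $w=\sigma\bv$. Proposition~\ref{P:AAbar}(2) forces $\bv=\bv(a,b)$ and $\sigma\rho^{-b}\in\langle \cs_k \mid \text{the $k$-th and $(k+1)$-th entries of }\bu_0\text{ agree}\rangle$; using $(\rho\Be_{\ell})^{a\ell+b}=\rho^b\bv(a,b)$ and the group law I factor $w=\tau\,\pi^{a\ell+b}$ with $\tau:=\sigma\rho^{-b}$. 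Applying $\tau$ to $y_0$ only permutes equal entries of $\bu_0$, so it fixes the charge $\bu_0$ and, since each contributing $\cs_k$ satisfies $u_{k+1}-u_k=0$, leaves the weight unchanged; applying $\pi$ repeatedly preserves the weight. Hence $\wt(y)=\wt(y_0)$, so the weight is constant on the $\AAbar$-locus, and by the descent step this common value is $\min(\wt((\bl;\bt)^{\EW_{\ell}}))$, giving the ``if'' direction.

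The main obstacle I anticipate is this converse step: one must argue that the element carrying one $\AAbar$-charge to another factors into moves that are \emph{individually} weight-neutral, which is precisely what Proposition~\ref{P:AAbar}(2) combined with the factorization through $\pi=\rho\Be_{\ell}$ supplies. The delicate bookkeeping is to respect the right-action group law, so that $\tau$ is applied while the charge is still $\bu_0$ and the coincidences among its entries make each $\cs_k$-step weight-neutral before the weight-preserving powers of $\pi$ are applied.
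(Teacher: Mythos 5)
Your proof is correct and takes essentially the same route as the paper's: part (1) by applying Proposition \ref{P:Uglov-partition}(2) generator by generator, the ``only if'' of part (2) by descent via Proposition \ref{P:Uglov-partition}(3,4), and the ``if'' direction by normalizing to a representative with charge in $\AAnobar$ and then using Proposition \ref{P:AAbar}(2) together with the factorization $(\rho\Be_{\ell})^{a\ell+b}=\rho^{b}\bv(a,b)$ and Proposition \ref{P:Uglov-partition}(1,3) to show the weight is constant on the set of orbit elements with charge in $\AAbar$. The extra points you spell out (attainment of the minimum, and why every orbit meets $\AAnobar$) are left implicit in the paper but are exactly the right justifications.
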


\begin{proof}
Part (1) follows immediately from Proposition \ref{P:Uglov-partition}(2).

For part (2), we note first that if $\wt(\bm;\bu) = \min (\wt((\bl;\bt)^{\EW_{\ell}}))$, then $\bu \in \AAbar$ by Proposition \ref{P:Uglov-partition}(3,4).
By replacing $(\bl;\bt)$ with $(\bl;\bt)^w$ for suitable $w \in \EW_{\ell}$, we may assume that $0 \leq t_1 \leq \dotsb \leq t_{\ell} \leq e-1$, where $\bt = (t_1,\dotsc, t_{\ell})$, so that $\bt \in \AAnobar$.
To complete the proof, we need to show that $\wt(\bm;\bu) = \wt(\bl;\bt)$ whenever $\bu \in \AAbar$.

Let $\bv = (v_1,\dotsc, v_{\ell}) \in \Z^{\ell}$ and $\sigma \in \mathfrak{S}_{\ell}$ be such that $(\bm;\bu) = (\bl;\bt)^{\sigma\bv}$, and assume that $\bu \in \AAbar$.
By Proposition \ref{P:AAbar}(2), there exist $a,b \in \Z$ with $0 \leq b \leq \ell -1$ such that $\bv = \bv(a,b)$ and $\sigma\rho^{-b} \in \left< \cs_k \mid t_k = t_{k+1} \right>$.
Thus,
\begin{align*}
\wt(\bm;\bu) &= \wt((\bl;\bt)^{\sigma\bv}) = \wt((\bl;\bt)^{\sigma\bv(a,b)})
= \wt((\bl;\bt)^{\sigma\rho^{-b}(\rho^b \bv(a,b))}) \\
&= \wt(((\bl;\bt)^{\sigma\rho^{-b}})^{(\rho\Be_{\ell})^{a\ell +b}}) =
\wt(\bl;\bt)
\end{align*}
by Proposition \ref{P:Uglov-partition}(1,3).
\end{proof}

\section{Blocks of Ariki-Koike algebras} \label{S:AK}

In this section, we apply the results of the last section to the blocks of Ariki-Koike algebras.  In particular, we prove a version of Nakayama's `Conjecture' for these blocks, as well as provide a sufficient condition for Scopes equivalence between two such blocks.

From now on, $\FF$ denotes a fixed field of arbitrary characteristic, and we assume that $\FF$ contains a primitive $e$-th root of unity $q$.
Let $\br = (r_1,\dotsc, r_{\ell}) \in \Z^{\ell}$ and let $n \in \Z^+$.
The Ariki-Koike algebra $\HH_n = \HH_{\FF, q, \br}(n)$ is the unital $\FF$-algebra generated by $\{ T_0,\dotsc, T_{n-1} \}$ subject to the following relations:
\begin{alignat*}{2}
(T_0 - q^{r_1})(T_0 - q^{r_2}) \dotsm (T_0 - q^{r_{\ell}}) &= 0; \\
(T_a - q)(T_a+1) &= 0 &\qquad &(1 \leq a \leq n-1 ); \\
T_0T_1T_0T_1 &= T_1T_0T_1T_0; && \\
T_aT_{a+1}T_a &= T_{a+1}T_a T_{a+1} &\qquad &( 1\leq a \leq n-2); \\
T_aT_b &= T_aT_b &\qquad &(|a-b| \geq 2).
\end{alignat*}
It is clear from the definition that $\HH_n$ only depends on the orbit $\br^{\EW_{\ell}}$ and not on $\br$.

It is well known that $\HH_n$ is a cellular algebra, in the sense of Graham and Lehrer \cite{GL}.
The Specht modules $\Sp^{\bl}$ ($\bl \in \PP^{\ell}(n)$), constructed by Dipper, James and Mathas \cite{DJM}, are known to be cell modules for $\HH_n$.
We note that, unlike $\HH_n$, these Specht modules actually depend on the order of the $r_j$'s.  

Lyle and Mathas gave the first classification of the blocks of $\HH_n$.  For convenience, if $B$ is a block of $\HH_n$ and $\bl \in \PP^{\ell}(n)$, we say that $\bl$ lies in $B$ if and only if $S^{\bl}$ lies in $B$.

\begin{thm}[{\cite[Theorem 2.11]{Lyle-Mathas}}] \label{T:LM-block}
Two $\ell$-partitions $\bl$ and $\bm$ of $n$ lie in the same block of $\HH_n$ if and only if $\res^{\br}(\bl) = \res^{\br}(\bm)$.
\end{thm}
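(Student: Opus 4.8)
The plan is to prove the two implications separately; the forward implication (same block $\Rightarrow$ equal residues) is the routine half, while the converse is the genuine difficulty. For necessity I would exploit the Jucys--Murphy elements $L_1,\dotsc,L_n$ of $\HH_n$: symmetric polynomials in the $L_a$ lie in the centre of $\HH_n$, so on each Specht module $\Sp^{\bl}$ they act by a scalar, and a single block carries a single such system of scalars. The joint eigenvalues of the $L_a$ on the standard cellular basis of $\Sp^{\bl}$ are the contents $q^{r_j+b-a}$ of the nodes $(a,b,j)\in[\bl]$; since $q$ is a primitive $e$-th root of unity, $q^{r_j+b-a}$ depends only on the residue $\res^{\br}(a,b,j)\in\Z/e\Z$. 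Thus the scalar by which a power sum $\sum_a L_a^m$ acts records $\sum_{i\in\Z/e\Z} c_i\,q^{im}$, where $c_i$ is the number of $i$-nodes of $\bl$; as $m$ ranges over $0,\dotsc,e-1$ the resulting values form an invertible (Vandermonde) system in the $c_i$, so they recover the whole tuple $(c_0,\dotsc,c_{e-1})$, i.e.\ the multiset $\res^{\br}(\bl)$. Hence two Specht modules in one block have equal residue multisets.

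For sufficiency I would argue by induction on $n$ using the $i$-restriction and $i$-induction functors $\iRes$ and $\iInd$ between $\HH_n$ and $\HH_{n-1}$, defined as the components of ordinary restriction and induction cut out by the generalised $q^{i}$-eigenspace of $L_n$. These functors are exact and biadjoint, each sends a block into a sum of blocks, and the branching rule gives $\iRes(\Sp^{\bl})$ a Specht filtration indexed by the multipartitions obtained from $\bl$ by removing a removable $i$-node. Given $\bl,\bm\in\PP^{\ell}(n)$ with $\res^{\br}(\bl)=\res^{\br}(\bm)$, I would pick a residue $i$ occurring in this common content, strip a removable $i$-node from each to produce multipartitions of $n-1$ that again share a residue content, apply the induction hypothesis to place the stripped multipartitions in one block of $\HH_{n-1}$, and then transport this linkage back up to $\HH_n$ through the adjunction $(\iInd,\iRes)$ together with the compatibility of these functors with the block decomposition.

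The crux, and where I expect the real work to lie, is the \emph{connectivity} underlying this induction: I must show that the elementary operations ``remove an $i$-node and reattach an $i$-node'' act transitively on the set of multipartitions with a fixed residue multiset, and that each elementary move keeps the two Specht modules in a common block. The cleanest way to organise this is to invoke the categorification of the integrable $\widehat{\mathfrak{sl}}_e$-action on $\bigoplus_n K_0(\HH_n)$ due to Ariki \cite{Ariki-96}, under which residue contents index the weight spaces and the functors $\iInd,\iRes$ realise the Chevalley generators; connectedness of a weight space then follows from the transitivity of the affine Weyl/crystal combinatorics, while the necessity half already shows distinct contents cannot share a block. Absent that machinery one would instead construct the linking homomorphisms or nonsplit extensions between the relevant Specht modules directly, which is the delicate and computation-heavy part. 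Either way, assembling the trivial base case $n=0$ (the empty multipartition, a single block) with the inductive step completes the classification \cite{Lyle-Mathas}.
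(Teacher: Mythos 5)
You should first be aware that the paper does not prove this statement at all: Theorem \ref{T:LM-block} is imported verbatim as \cite[Theorem 2.11]{Lyle-Mathas} and used as a black box, so your proposal must be measured against Lyle and Mathas's original argument, which runs through a reduction to cyclotomic $q$-Schur algebras, the Jantzen sum formula, and a delicate combinatorial induction --- not through the route you sketch. Within your sketch there are genuine gaps in both directions. In the forward direction, power sums of Jucys--Murphy elements plus Vandermonde inversion only recover the node counts $c_i$ as elements of $\FF$, i.e.\ modulo $\mathrm{char}\,\FF$; since $\FF$ has arbitrary characteristic, this does not recover the multiset $\res^{\br}(\bl)$. (This is fixable: compare instead the full content polynomials $\prod_{(a,b,j)\in[\bl]}\bigl(X - q^{\,b-a+r_j}\bigr)$, which the elementary symmetric polynomials in the $L_a$ determine, and use unique factorisation in $\FF[X]$ to recover the exponents $c_i$ as honest integers.)

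The converse is where the proposal breaks down irreparably. First, your inductive step presumes that $\bl$ and $\bm$, having equal residue multisets, share a removable node of a common residue $i$; this is false. For $\ell=1$, $e=3$, $r=0$, the partitions $(3)$ and $(1^3)$ have the same residue content $\{0,1,2\}$, yet the unique removable node of $(3)$ has residue $2$ while that of $(1^3)$ has residue $1$. Second, even when a common $i$ exists, knowing that the stripped multipartitions lie in one block of $\HH_{n-1}$ cannot be ``transported back up'': $\iInd$ sends a block of $\HH_{n-1}$ into a \emph{direct sum} of blocks of $\HH_n$, and induction/restriction do not reflect linkage, so no formal adjunction argument places $\Sp^{\bl}$ and $\Sp^{\bm}$ in a single block. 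Third, the appeal to Ariki's categorification is circular: that theorem identifies the \emph{sum} of all blocks with a fixed residue content with a weight space of an $\widehat{\mathfrak{sl}}_e$-module, and the assertion that each weight space is a \emph{single} block is precisely the theorem to be proved. Crystal combinatorics cannot substitute for this --- the crystal of the level-$\ell$ Fock space is not connected, and two multipartitions with equal residue content may lie in different crystal components. Closing exactly these gaps (via Jantzen filtrations rather than functorial linkage) is the substance of Lyle and Mathas's paper.
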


A node with $e$-residue $i$ is called an $i$-node, and so we have another equivalent classification of blocks of $\HH_n$: $\bl$ and $\bm$ lie in the same block if and only if $[\bl]$ and $[\bm]$ has the same number of $i$-nodes for all $i \in \Z/e\Z$. More formally, let $C_i(\bl)$ denote the number of $i$-nodes in $[\bl]$; then $\bl$ and $\bm$ lie in the same block if and only if $C_i(\bl) = C_i(\bm)$ for all $i \in \Z/e\Z$.

Using the ingenious invention of the $e$-abacus by James, it is well known that when $\ell =1$ so that $\bl = (\lambda)$, $\bm = (\mu)$ and $\br = (r)$, we have $\res^{\br}(\bl) = \res^{\br}(\bm)$ if and only if $\core(\lambda) = \core(\mu)$ and $\wt(\lambda) = \wt(\mu)$.
The first main result of this section is a higher analogue of this statement.

\begin{thm} \label{T:core&weight}
Let $\bl, \bm \in \PP^{\ell}$.  Then $\res^{\br}(\bl) = \res^{\br}(\bm)$ if and only if $\core(\bl;\br) = \core(\bm;\br)$ and $\min(\wt((\bl;\br)^{\EW_{\ell}})) = \min(\wt((\bm;\br)^{\EW_{\ell}}))$.
\end{thm}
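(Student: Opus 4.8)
The plan is to pass everything to the Uglov image, where the classical theory of $e$-cores and $e$-weights of a single partition (James's abacus, Nakayama's Conjecture) is available, and to match the two sides at the level of the \emph{residue content} $\vec{C}(\bl) := (C_0(\bl),\dots,C_{e-1}(\bl))$. All three quantities in the statement are invariant under the right $\EW_{\ell}$-action: $\res^{\br}(\bl)$ by \eqref{E:res-hub}, $\core(\bl;\br)$ by Theorem \ref{T:core and weight}(1), and $\min(\wt((\bl;\br)^{\EW_{\ell}}))$ because it is a minimum over an orbit. So I would first replace $(\bl,\bm,\br)$ by $(\bl^{w},\bm^{w},\br^{w})$ for a common $w\in\EW_{\ell}$ with $\br^{w}\in\AAbar$. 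By Theorem \ref{T:core and weight}(2) this gives $\min(\wt((\bl;\br)^{\EW_{\ell}}))=\wt(\bl;\br)=\wt(\UU(\bl;\br))$, and likewise for $\bm$, while $\UU(\bl;\br)$ and $\UU(\bm;\br)$ are honest partitions of the common charge $t=\sum_{j}r_{j}$. Thus the right-hand condition says exactly that these two partitions have equal $e$-core and equal $e$-weight, which by the classical single-partition Nakayama Conjecture is equivalent to $\res^{t}(\UU(\bl;\br))=\res^{t}(\UU(\bm;\br))$.

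The substance is then to control $\vec{C}(\bl)$. Recall that by Theorem \ref{T:LM-block} one has $\res^{\br}(\bl)=\res^{\br}(\bm)$ iff $\vec{C}(\bl)=\vec{C}(\bm)$. The first key input is the discrete-Laplacian identity
\begin{equation*}
\hub_{i}^{\br}(\bl) = 2C_{i}(\bl)-C_{i-1}(\bl)-C_{i+1}(\bl)-m_{i},\qquad m_{i}:=|\{\,j: r_{j}\equiv_{e} i\,\}|,
\end{equation*}
which I would prove by summing over components to reduce to the single-partition case, and then verifying the latter directly from the $\beta$-set description of removable and addable $i$-nodes (as in Lemma \ref{L:hub}). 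This shows $\vec{C}(\bl)$ determines $\Hub^{\br}(\bl)$; conversely, Lemma \ref{L:}(3) and Lemma \ref{L:hub}(2), together with \eqref{E:quot-of-core}, show that $\Hub^{\br}(\bl)$ determines the charges $\fs(\B_{i})$ of the $e$-quotient $(\B_{0},\dots,\B_{e-1})$ of $\UU(\beta_{\br}(\bl))$ (the total being the fixed $t$), hence $\core(\UU(\bl;\br))=\core(\bl;\br)$, and vice versa. In particular ``same residue content $\Rightarrow$ same core'', and more precisely two multipartitions have the same core iff their residue-content vectors differ by a constant vector, since the kernel of the Laplacian on the cycle $\Z/e\Z$ (connected for $e\ge 2$) consists of the constant vectors.

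To finish I would express the minimal weight itself as an explicit function of $\vec{C}(\bl)$. Using $\br\in\AAbar$, the minimal weight equals the weight $\wt^{\br}(\bl)$ of Fayers \cite{Fayers-weight} (the identification resting on \cite{JL-cores}), namely
\begin{equation*}
\min\bigl(\wt((\bl;\br)^{\EW_{\ell}})\bigr) = \sum_{j=1}^{\ell} C_{\bar{r}_{j}}(\bl) + \tfrac{1}{2}\sum_{i\in\Z/e\Z}\bigl(C_{i}(\bl)-C_{i+1}(\bl)\bigr)^{2}.
\end{equation*}
Writing $\vec{C}(\bl)-\vec{C}(\bm)=\kappa\,\bone$ for the constant vector forced by equality of cores, the quadratic term depends only on the successive differences $C_{i}-C_{i+1}$, which a constant shift does not change, so it agrees for $\bl$ and $\bm$, while the linear term changes by exactly $\ell\kappa$. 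Hence equality of cores together with equality of minimal weights forces $\ell\kappa=0$, i.e.\ $\kappa=0$ and $\vec{C}(\bl)=\vec{C}(\bm)$, giving $\res^{\br}(\bl)=\res^{\br}(\bm)$; the reverse implication is immediate because $\vec{C}(\bl)$ determines both the core and the minimal weight.

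The main obstacle is precisely this last ingredient: that along a fixed core-class the minimal weight is a strictly monotone (here affine, with nonzero slope $\ell$) function of the uniform shift $\kappa$ of the residue content---equivalently, that the minimal weight together with the core pins down $|\bl|=\sum_{i}C_{i}(\bl)$. If the identification of $\min\wt$ with Fayers' weight is taken as known, this is the short computation above; otherwise it must be established directly, amounting to the size-versus-weight relation that $|\bl|$ exceeds the size of the minimal multipartition in its core-class by exactly $\min(\wt((\bl;\br)^{\EW_{\ell}}))$ for $\br\in\AAbar$. Proving that relation requires tracking how a single increment of the $e$-weight of $\UU(\bl;\br)$ translates, through the Uglov interleaving of runners (Lemma \ref{L:Uglov map}(5)), into a unit increase of $|\bl|$, and this bead-counting argument is where I expect the real work to lie.
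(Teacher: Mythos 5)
Your proposal is correct, but it takes a genuinely different route from the paper at the decisive step. Both arguments begin identically: replace $(\bl,\bm,\br)$ by $(\bl^w,\bm^w,\br^w)$ with $\br^w\in\AAbar$ (all three quantities being invariant under the $\EW_{\ell}$-action), interpret the right-hand side via Theorem \ref{T:core and weight} as equality of $e$-cores and $e$-weights of the Uglov images, and invoke the classical single-partition Nakayama Conjecture there. The paper then finishes in one stroke by citing \cite[Corollary 2.27]{JL-cores}, which says precisely that for a multicharge in $\AAbar$ the Uglov map preserves residue multisets, i.e.\ $\res^{\br^w}(\bl^w)=\res^{\br^w}(\bm^w)$ if and only if $\res^{r}(\UU(\bl^w;\br^w))=\res^{r}(\UU(\bm^w;\br^w))$, where $r=\sum_{j=1}^{\ell}r_j$. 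You instead bridge the two sides through residue contents: the hub--content identity $\hub_i^{\br}(\bl)=2C_i(\bl)-C_{i-1}(\bl)-C_{i+1}(\bl)-m_i$ (routine, and provable componentwise as you sketch), combined with Lemma \ref{L:hub}(2), Lemma \ref{L:}(3) and \eqref{E:quot-of-core}, shows that for fixed total charge equality of cores is equivalent to equality of hubs, hence to the content vectors differing by an element of the kernel of the cycle Laplacian, i.e.\ by a constant vector $\kappa\bone$; Fayers' explicit weight formula then forces $\ell\kappa=0$, so $\kappa=0$. Your identification of $\min(\wt((\bl;\br)^{\EW_{\ell}}))$ with Fayers' weight is legitimate and non-circular: it uses only \cite[Theorem 3.4]{JL-cores} together with Theorem \ref{T:core and weight}(2), exactly as asserted in the paper's remark following Corollary \ref{C:Naka}, neither of which depends on the theorem being proved. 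So both proofs lean on Jacon--Lecouvey, but on different results (Corollary 2.27 for the paper, Theorem 3.4 for you). The paper's proof is shorter; yours is combinatorially more self-contained at the key step and yields extra structural information along the way --- a fixed core class consists exactly of the content vectors modulo constant shifts, and core plus minimal weight pin down $|\bl|$ --- at the cost that, if one also wished to avoid Theorem 3.4, the bead-counting argument you flag at the end would indeed have to be carried out.
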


\begin{proof}
Let $w \in \EW^{\ell}$ be such that $\br^w \in \AAbar$, and let $r = \sum_{j=1}^{\ell} r_j$.
Firstly, $\res^{\br}(\bnu) = \res^{\br^w}(\bnu^w)$ for any $\bnu \in \PP^{\ell}$ by \eqref{E:res-hub}.
Next, $\res^{\br^w}(\bl^w) = \res^{\br^w}(\bm^w)$ if and only if $\res^r(\UU(\bl^w;\br^w)) = \res^r(\UU(\bm^w;\br^w))$ by \cite[Corollary 2.27]{JL-cores}.
Together with the fact that $\res^{r}(\lambda) = \res^{r}(\mu)$ if and only if $\core(\lambda) = \core(\mu)$ and $\wt(\lambda) = \wt(\mu)$, we have
\begin{align*}
\res^{\br}(\bl) = \res^{\br}(\bm)
&\Leftrightarrow \res^{\br^w}(\bl^w) = \res^{\br^w}(\bm^w) \\
&\Leftrightarrow \res^{r}(\UU(\bl^w;\br^w)) = \res^{r}(\UU(\bm^w;\br^w)) \\
&\Leftrightarrow \core(\UU(\bl^w;\br^w)) = \core(\UU(\bm^w;\br^w)) \text{ and } \wt(\UU(\bl^w;\br^w)) = \wt(\UU(\bm^w;\br^w)) \\
&\Leftrightarrow \core(\bl^w;\br^w) = \core(\bm^w;\br^w) \text{ and } \wt(\bl^w;\br^w) = \wt(\bm^w;\br^w) \\
&\Leftrightarrow \core(\bl;\br) = \core(\bm;\br) \text{ and } \min(\wt((\bl;\br)^{\EW_{\ell}})) = \min(\wt((\bm;\br)^{\EW_{\ell}}))
\end{align*}
by Theorems \ref{T:core and weight}.
\end{proof}

In view of Theorem \ref{T:core&weight}, we make the following definition:

\begin{Def}
Let $\bl \in \PP^{\ell}$.  Define
$$
\coreH(\bl) := \core(\bl;\br) \qquad \text{and} \qquad \wtH(\bl) := \min(\wt((\bl;\br)^{\EW_{\ell}})).
$$
\end{Def}

With the above definition, we obtain immediately from Theorems \ref{T:LM-block} and \ref{T:core&weight} the following corollary, which can be considered as a higher analogue of Nakayama's `Conjecture' for blocks of symmetric groups and Iwahori-Hecke algebras of type $A$ (see \cite[6.1.21]{JK}).

\begin{cor} \label{C:Naka}
Let $\bl,\bm \in \PP^{\ell}$.  Then $\bl$ and $\bm$ lie in the same block of an Ariki-Koike algebra if and only if $\coreH(\bl) = \coreH(\bm)$ and $\wtH(\bl) = \wtH(\bm)$.
\end{cor}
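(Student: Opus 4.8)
The plan is to obtain the corollary by directly chaining together Theorem \ref{T:LM-block} and Theorem \ref{T:core&weight}, the only genuine work being to reconcile the fact that Theorem \ref{T:LM-block} is stated for $\ell$-partitions of a \emph{fixed} $n$, whereas the corollary allows $\bl$ and $\bm$ to be arbitrary elements of $\PP^{\ell}$ (with the multicharge $\br$ fixed throughout the section).

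First I would record what ``lie in the same block of an Ariki-Koike algebra'' means: a block is a block of some $\HH_n$, so two multipartitions can share a block only when they both lie in $\PP^{\ell}(n)$ for one common $n$. The forward implication is then immediate. If $\bl$ and $\bm$ lie in the same block of $\HH_n$, then in particular $|\bl| = |\bm| = n$, so Theorem \ref{T:LM-block} yields $\res^{\br}(\bl) = \res^{\br}(\bm)$, and Theorem \ref{T:core&weight} converts this into $\coreH(\bl) = \coreH(\bm)$ together with $\wtH(\bl) = \wtH(\bm)$.

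For the converse I would start from $\coreH(\bl) = \coreH(\bm)$ and $\wtH(\bl) = \wtH(\bm)$, which by Theorem \ref{T:core&weight} is precisely the statement $\res^{\br}(\bl) = \res^{\br}(\bm)$. The key observation is that the multiset $\res^{\br}(\bl)$ has cardinality equal to the number of nodes of $[\bl]$, namely $|\bl|$; hence equality of the two residue multisets forces $|\bl| = |\bm| =: n$, so that $\bl, \bm \in \PP^{\ell}(n)$. Only now is Theorem \ref{T:LM-block} applicable, and it shows that $\bl$ and $\bm$ lie in the same block of $\HH_n$, completing the equivalence.

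The argument is essentially bookkeeping, and the single point requiring care is this size-matching step: one must check that the hypotheses $\coreH(\bl) = \coreH(\bm)$ and $\wtH(\bl) = \wtH(\bm)$ already force $|\bl| = |\bm|$ before Theorem \ref{T:LM-block} may be invoked. This is where I expect the (very mild) difficulty to lie, and it is resolved by the node-counting remark on the cardinality of the residue multiset above.
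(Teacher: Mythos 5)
Your proof is correct and follows essentially the same route as the paper, which obtains the corollary immediately by combining Theorem \ref{T:LM-block} with Theorem \ref{T:core&weight}. Your explicit size-matching step (that equality of the residue multisets forces $|\bl| = |\bm|$, since the multiset $\res^{\br}(\bl)$ has cardinality $|\bl|$) is a sound way of making precise the bookkeeping that the paper leaves implicit.
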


\begin{rem}
In \cite{Fayers-weight}, Fayers defined $e$-weights for multipartitions, which we shall denote as $\mathsf{wt}_F(-)$ here to avoid confusion with our definitions in this paper.  It follows from \cite[Theorem 3.4]{JL-cores} and Theorem \ref{T:core and weight}(2) that his definition $\mathsf{wt}_F(-)$ agrees with our definition $\wtH(-)$, i.e.\ $\mathsf{wt}_F(\bl) = \wtH(\bl)$ for all $\bl \in \PP^{\ell}$.
\end{rem}

\begin{Def}
By Corollary \ref{C:Naka}, given a block $B$ of $\HH_n$, we may define, without ambiguity, its $e$-core and its $e$-weight, denoted $\core(B)$ and $\wt(B)$ respectively, as
\begin{align*}
\core(B) := \coreH(\bl) \qquad \text{and} \qquad \wt(B) &:= \wtH(\bl)
\end{align*}
for any $\bl \in \PP^{\ell}$ lying in $B$.
\end{Def}

It is well known that the affine Weyl group $\AW_e$ acts naturally on the blocks of Ariki-Koike algebras. We now give another perspective from Nakayama's `Conjecture' of Ariki-Koike algebras (Corollary \ref{C:Naka}).
Recall the left action $\DDot{\bt}$ (for $\bt \in \Z^{\ell}$) and $\DDot{}$ of $\AW_e$ on $\PP^{\ell}$ and $\PP^{\ell} \times \Z^{\ell}$ respectively, as detailed in the last two paragraphs of Subsection \ref{SS:Weyl group action}.
Let $r  = \sum_{j=1}^{\ell} r_j$.
For $\bl \in \PP^{\ell}$, we have
\begin{align*}
\coreH(\cs_i \DDot{\br} \bl)
&= \beta^{-1}( \core(\UU(\beta_{\br}(\cs_i \DDot{\br} \bl))))
= \beta^{-1} (\core( \UU(\cs_i \DDot{1} \beta_{\br}(\bl)))) \\
&= \beta^{-1}( \core( \cs_i \DDot{\ell} \UU(\beta_{\br}(\bl))))
= \beta^{-1} (\cs_i \DDot{\ell} \core( \UU(\beta_{\br}(\bl))))
= \beta^{-1} (\cs_i \DDot{\ell} \beta_r(\coreH(\bl)))
\end{align*}
for all $i \in \Z/e\Z$, where the first two equalities in the second line follow from \eqref{E:cs_i-U} and \eqref{E:cs_i-core} respectively.
On the other hand, since the left action $\DDot{}$ of $\AW_e$ on $\PP^{\ell} \times \Z^{\ell}$ commutes with the right action of $\EW_{\ell}$,
we have
$$
\cs_i \DDot{} (\bl;\br)^{\EW_{\ell}} = (\cs_i \DDot{} (\bl;\br))^{\EW_{\ell}} = (\cs_i \DDot{\br} \bl; \br)^{\EW_{\ell}}.$$
Furthermore,
\begin{align*}
\wt(\cs_i \DDot{\bt} \bm;\bt)
&= \wt(\UU(\beta_{\bt}(\cs_i \DDot{\bt} \bm)))
= \wt(\UU(\cs_i \DDot{1} \beta_{\bt}(\bm))) \\
&= \wt(\cs_i \DDot{\ell} \UU(\beta_{\bt}(\bm)))
= \wt(\UU(\beta_{\bt}(\bm))) = \wt(\bm;\bt)
\end{align*}
for all $(\bm;\bt) \in \PP^{\ell} \times \Z^{\ell}$ by \eqref{E:cs_i-wt}.
Thus,
\begin{align*}
\wtH(\cs_i \DDot{\br} \bl)
&= \min( \wt((\cs_i \DDot{\br} \bl;\br)^{\EW_{\ell}})) \\
&= \min( \wt(\cs_i \DDot{} (\bl;\br)^{\EW_{\ell}} )) \\
&= \min\{ \wt(\cs_i \DDot{\bt} \bm;\bt) \mid (\bm;\bt) \in (\bl;\br)^{\EW_{\ell}} \} \\
&= \min\{ \wt(\bm;\bt) \mid (\bm;\bt) \in (\bl;\br)^{\EW_{\ell}} \}
= \wtH(\bl).
\end{align*}

The upshot of this is that each $\cs_i$ sends the $\ell$-partitions lying in a block $B$ to the $\ell$-partitions lying in the block $\cs_i \DDot{} B$, where
\begin{align*}
\core(\cs_i \DDot{} B) &= \beta^{-1} (\cs_i \DDot{\ell} \beta_r(\core(B))); \\
\wt(\cs_i \DDot{} B) &= \wt(B).
\end{align*}


Since $\cs_i \DDot{\br} \bl$ is obtained from $\bl$ by removing all its removable $i$-nodes and adding all its addable $i$-nodes, both relative to $\br$, we see $\cs_i \DDot{\br} \bl \in \PP^{\ell}(n - \hub_i^{\br}(\bl))$ if $\bl \in \PP^{\ell}(n)$ (see Subsection \ref{SS:multipartitions} for the definition of $\hub^{\bt}_i(\bl)$ for $(\bl;\bt) \in \PP^{\ell} \times \Z^{\ell}$).
Thus $\cs_i \DDot{} B$
is a block of $\HH_{n-\hub_i(B)}$ when $B$ is a block of $\HH_n$, where
$\hub_i(B) = \sum_{j=1}^{\ell} \hub^{\br}_i(\bl)$ for any $\ell$-partition $\bl = (\lambda^{(1)},\dotsc, \lambda^{(\ell)})$ lying in $B$.
Note that $\cs_i \DDot{} B = B$ if and only if $\hub_i(B) = 0$.

\begin{rem}
Fayers \cite{Fayers-weight} called the $e$-tuple $(\delta_0(B),\dotsc, \delta_{e-1}(B))$ the {\em ($e$-)hub of $B$}, and showed that $B$ is completely determined by its ($e$-)hub and its ($e$-)weight.
\end{rem}

\begin{Def}
Let $B$ be a block of $\HH_n$ and let $i \in \Z/e\Z$.  If $B \ne \cs_i \DDot{} B$, we say that the blocks $B$ and $\cs_i \DDot{} B$ {\em form a $[w:k]$-pair}, where
\begin{align*}
w &= \wt(B)\ (= \wt(\cs_i \DDot{} B)); \\
k &= |\hub_i(B)|\ (= |\hub_i(\cs_i \DDot{} B)|).
\end{align*}
\end{Def}





The following theorem is well known among experts.

\begin{thm}[see {\cite[Theorem 6.4]{CR}}]
Let $B$ be a block of $\HH_n$ and $i \in \Z/e\Z$.  Suppose that for all $\ell$-partitions $\bl$ lying in $B$, $\bl$ has no addable $i$-node relative to $\br$.  Then $B$ and $\cs_i \DDot{} B$ are Morita equivalent, with the `divided powers' of $i$-restriction and $i$-induction functors giving the equivalence between their module categories.  Under this equivalence, the Specht module $S^{\bl}$ lying in $B$ corresponds to $S^{\cs_i \DDot{\br} \bl}$.
\end{thm}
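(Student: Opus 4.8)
The plan is to deduce the statement from the theory of $\mathfrak{sl}_2$-categorifications of Chuang and Rouquier, specialised to the extremal weight space picked out by the hypothesis. First I would set up the categorification datum on $\mathcal{C} := \bigoplus_{m \geq 0} \HH_m\text{-mod}$. Via the Brundan--Kleshchev isomorphism between $\HH_m$ and a cyclotomic KLR algebra, the $i$-induction and $i$-restriction functors $\iInd$ and $\iRes$, equipped with the natural transformations supplied by the KLR generators, endow $\mathcal{C}$ with the structure of an $\widehat{\mathfrak{sl}}_e$-categorification, and hence of an $\mathfrak{sl}_2$-categorification for each fixed residue $i$, in which $\iInd$ and $\iRes$ play the roles of the lowering and raising operators $f_i$ and $e_i$. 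The block decomposition of $\mathcal{C}$ refines the weight-space decomposition of this $\mathfrak{sl}_2$, and by Theorem \ref{T:LM-block} the relevant $\mathfrak{sl}_2$-weight of a block $B$ works out to $-\hub_i(B)$, the number of addable minus the number of removable $i$-nodes common to the $\ell$-partitions in $B$.

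Next I would translate the hypothesis into an extremal-weight condition. Since every $\bl$ lying in $B$ has no addable $i$-node relative to $\br$, the node-adding functor $\iInd$ vanishes on $B$ (its Specht filtration is indexed by addable $i$-nodes, of which there are none), so every object of $B$ is a lowest-weight object for this $\mathfrak{sl}_2$, of weight $-k$, where $k = \hub_i(B) = |\hub_i(B)|$ is the common number of removable $i$-nodes. The computation preceding the statement identifies $\cs_i \DDot{} B$ as the block of $\HH_{n-k}$ obtained from $B$ by deleting all its removable $i$-nodes; since $\cs_i^2 = 1$, this block carries no removable $i$-node and exactly $k$ addable ones, so it is annihilated by $\iRes$ and sits at the opposite, highest-weight end of the same length-$k$ $\mathfrak{sl}_2$-string.

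I would then invoke \cite[Theorem 6.4]{CR}: in an $\mathfrak{sl}_2$-categorification the divided powers $\iRes^{(k)}$ and $\iInd^{(k)}$ restrict to mutually inverse equivalences between the lowest- and highest-weight blocks of a length-$k$ string, and in this extremal situation these are abelian (Morita) equivalences rather than merely derived ones. The extremal hypothesis is exactly what forces the unit and counit of the $(\iRes^{(k)}, \iInd^{(k)})$-adjunction to be isomorphisms, mirroring the fact that in the irreducible $\mathfrak{sl}_2$-module of highest weight $k$ the divided power $e_i^{(k)}$ carries the lowest-weight space isomorphically onto the highest. To identify the correspondence on Specht modules, I would note that $\iRes^{(k)} \Sp^{\bl}$ has a Specht filtration whose subquotients correspond to the ways of stripping $k$ removable $i$-nodes from $\bl$; in the extremal case there is a single such way, giving the lone term $\Sp^{\cs_i \DDot{\br} \bl}$, in agreement with the abacus description of $\cs_i \DDot{\br}$ recorded in Subsection \ref{SS:Weyl group action} as the simultaneous removal of all removable $i$-nodes.

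The main obstacle is not the combinatorics but the extraction of the genuine Morita (rather than merely derived) statement from \cite{CR}, together with the verification of the categorification axioms underlying it: one must confirm that the natural transformations attached to $\iInd$ and $\iRes$ satisfy the required nil-affine-Hecke relations and integrability conditions, and that the lowest-weight hypothesis places the string in the length-zero perverse situation in which \cite[Theorem 6.4]{CR} delivers an exact equivalence of abelian categories. The matching of Specht modules is then routine given the abacus picture of $\cs_i \DDot{}$.
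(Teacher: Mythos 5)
Your proposal is correct and coincides with the paper's own approach: the paper offers no proof of this theorem at all, stating it as well known among experts and citing precisely \cite[Theorem 6.4]{CR}, and your argument---realising $B$ and $\cs_i \DDot{} B$ as the two extremal weight spaces of a Chuang--Rouquier $\mathfrak{sl}_2$-categorification on $\bigoplus_{m \geq 0} \HH_m\text{-mod}$ (block weight $-\hub_i(B)$, with $\iInd$ vanishing on $B$ because no $\bl$ in $B$ has an addable $i$-node), so that the Rickard complex collapses to the single term $\iRes^{(k)}$ and the derived equivalence becomes an exact, hence Morita, equivalence---is exactly the expert argument that this citation encodes. Your identification of the Specht correspondence $\Sp^{\bl} \mapsto \Sp^{\cs_i \DDot{\br} \bl}$ via the $k!$-fold collapse of the Specht filtration of $\iRes^{k}\,\Sp^{\bl}$ is likewise the standard completion, so you have simply written out in full what the paper delegates to the reference.
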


We note that an $\ell$-partition $\bl$ has no addable $i$-node relative to $\br$ if and only if $\cs_i \DDot{\br} \bl$ has no removable $i$-node relative to $\br$.

Let $B$ be a block of $\HH_n$ and $i \in \Z/e\Z$.  Following \cite{Lyle-RoCK, Webster}, we say that $B$ and $\cs_i \DDot{} B$ are {\em Scopes equivalent} if all $\ell$-partitions lying in $B$ have no addable $i$-node or all $\ell$-partitions lying in $B$ have no removable $i$-node.
Clearly, Scopes equivalence is symmetric by definition, and we further extend it to an equivalence relation on all blocks by taking its reflexive and transitive closure.


We have the following lemma by Scopes:

\begin{lem}[{Proof of \cite[Lemma 2.1]{Scopes}}] \label{L:Scopes}
Let $\B$ be a $\beta$-set. If $\wt(\B) \leq \hub_i(\B)$, then there does not exist $x \in \B$ such that $x\equiv_e i-1$ and $x + 1 \notin \B$.
\end{lem}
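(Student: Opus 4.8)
The plan is to prove the contrapositive: assuming there is a bead $x \in \B$ with $x \equiv_e i-1$ and $x+1 \notin \B$, I will show that $\wt(\B) > \hub_i(\B)$, contradicting $\wt(\B) \leq \hub_i(\B)$. First I would pass to the $e$-quotient $\quot(\B) = (\B_0,\dotsc,\B_{e-1})$ and isolate the two runners $i-1$ and $i$, which are genuinely distinct since $e \geq 2$. Writing $a = \lfloor x/e \rfloor$, the hypothesis says there is a bead on runner $i-1$ whose immediate successor on runner $i$ is vacant: if $i \neq 0$ this reads $a \in \B_{i-1}$ and $a \notin \B_i$, while if $i = 0$ it reads $a \in \B_{e-1}$ and $a+1 \notin \B_0$. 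Since $\wt(\B) = \sum_j |\beta^{-1}(\B_j)|$ by \eqref{E:wt-quot} and the two runners are distinct, I immediately get the lower bound $\wt(\B) \geq |\beta^{-1}(\B_{i-1})| + |\beta^{-1}(\B_i)|$, so it suffices to bound these two components from below.

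The two key estimates come from the observation (immediate from the definition of $\beta^{-1}$) that for any $\beta$-set, $|\beta^{-1}(-)|$ counts the pairs consisting of a bead together with a vacant position below it on the same runner. Applying this on runner $i-1$, the bead at row $a$ lies above $G$ vacant positions, so $|\beta^{-1}(\B_{i-1})| \geq G$; dually, on runner $i$ the vacant position (at row $a$, resp.\ $a+1$) lies below $P$ beads, so $|\beta^{-1}(\B_i)| \geq P$. Here $G$ is the number of gaps strictly below the distinguished bead on runner $i-1$ and $P$ is the number of beads strictly above the distinguished gap on runner $i$. Combining, $\wt(\B) \geq P + G$.

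Finally I would compute $\hub_i(\B) = \fs(\B_i) - \fs(\B_{i-1}) - \delta_{i0}$ via Lemma \ref{L:hub}(2), using the elementary formula $\fs(\mathsf{C}) = |\{b \in \mathsf{C} : b \geq a\}| - |\{c < a : c \notin \mathsf{C}\}| + a$ (the charge measured from row $a$) on each of the two runners. The distinguished bead on runner $i-1$ is an occupied reference position and so contributes an explicit $+1$ to $\fs(\B_{i-1})$, i.e.\ a $-1$ to the difference, while the remaining counts (beads above on runner $i-1$, and gaps below the reference on runner $i$) enter with a minus sign; these are nonnegative. The net result is $\hub_i(\B) = P + G - 1 - (\text{nonnegative})$, hence $\hub_i(\B) \leq P + G - 1 < P + G \leq \wt(\B)$, which is exactly the strict inequality sought.

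The routine-but-delicate part, and the only place I expect real care is needed, is the bookkeeping in this last step. One must align the reference rows correctly: in the case $i \neq 0$ the distinguished bead and gap lie in the same abacus row, whereas for $i = 0$ they lie in consecutive rows (so the two charge computations are offset by one), and in that case the decisive constant $-1$ is supplied not by the reference bead but by the $\delta_{i0}$ term in the hub formula. In both cases the constant is exactly $-1$, which is what guarantees strictness and hence the contradiction; verifying this uniformly across the two cases is the crux of the argument.
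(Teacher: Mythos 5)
Your proof is correct and is essentially the same counting argument that the paper imports by citation from the proof of \cite[Lemma 2.1]{Scopes}, recast in this paper's charge/quotient formalism: the two lower bounds $|\beta^{-1}(\B_{i-1})| \geq G$ and $|\beta^{-1}(\B_i)| \geq P$, combined with computing $\fs(\B_i)$ and $\fs(\B_{i-1})$ from the reference rows, give $\hub_i(\B) \leq P+G-1 < P+G \leq \wt(\B)$, with the $i=0$ row-offset and the $\delta_{i0}$ term handled exactly as you describe. The only flaw is one of wording, not substance: under the paper's abacus convention (row labels increase going \emph{downward}, and beads slide \emph{up} to form the core), $|\beta^{-1}(-)|$ counts pairs of a bead with a vacant position \emph{above} it (smaller value), so your ``below the bead''/``above the gap'' phrasing is orientation-reversed relative to the paper --- the counts $G$ and $P$ you actually use are the correct (finite) ones, but the text should be flipped to match the paper's convention.
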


\begin{thm} \label{T:Scopes}
Let $B$ and $C$ be blocks forming a $[w:k]$-pair, with $k \geq w$.  Then $B$ and $C$ are Scopes equivalent.
\end{thm}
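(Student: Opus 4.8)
The plan is to transport the problem to the $\beta$-set $\B := \UU(\beta_{\br}(\bl))$ attached to each $\bl$ via the Uglov map and then to apply Scopes' Lemma \ref{L:Scopes}. Since $\HH_n$, and with it the notions of block, $e$-weight and $e$-hub, depends only on the orbit $\br^{\EW_{\ell}}$, and since block membership, $e$-weight, $e$-hub and the existence of addable/removable $i$-nodes are all preserved by the right $\EW_{\ell}$-action (by \eqref{E:res-hub} together with the observation that this action merely permutes the components of $\beta_{\br}(\bl)$ and translates their charges by multiples of $e$), I would first reduce to the case $\br \in \AAbar$. Theorem \ref{T:core and weight}(2) then gives $\wt(\B) = \wt(\bl;\br) = \wtH(\bl) = w$ for every $\bl$ lying in $B$. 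Moreover, since $\cs_i$ swaps runners $i-1$ and $i$ in each component, we have $\hub_i(C) = -\hub_i(B)$ for $C = \cs_i \DDot{} B$; as Scopes equivalence is symmetric in $B$ and $C$, I may assume $\hub_i(B) = k > 0$. It then suffices to show that no $\bl$ lying in $B$ has an addable $i$-node relative to $\br$, this being one of the two alternatives in the definition of Scopes equivalence.

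The crux is a transfer statement, valid for $i \neq 0$: if $\bl$ has an addable $i$-node relative to $\br$, then $\B$ has a bead on runner $i-1$ whose successor is vacant, i.e.\ some $z \in \B$ with $z \equiv_e i-1$ and $z+1 \notin \B$. Indeed, such an addable $i$-node provides $x \in \beta_{r_j}(\lambda^{(j)})$ with $x \equiv_e i-1$ and $x+1 \notin \beta_{r_j}(\lambda^{(j)})$, and I would check, using the explicit formula for $\uu_j$ and Lemma \ref{L:Uglov map}(1), that $z := \uu_j(x)$ satisfies $z \equiv_e i-1$ and that its successor is $z+1 = \uu_j(x+1)$; since the images $\uu_1(\Z),\dotsc,\uu_{\ell}(\Z)$ are disjoint (Lemma \ref{L:Uglov map}(4)), $z+1 \in \B$ would force $x+1 \in \beta_{r_j}(\lambda^{(j)})$, a contradiction. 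The essential point is that for $i \neq 0$ the successor stays within the image of the same component $j$. Granting this, and recalling that the correction term $(\ell-1)\delta_{i0}$ in Lemma \ref{L:}(3) vanishes for $i \neq 0$, we obtain $\hub_i(\B) = \hub_i^{\br}(\bl) = \hub_i(B) = k \geq w = \wt(\B)$; Scopes' Lemma \ref{L:Scopes} then says that $\B$ has no bead on runner $i-1$ with vacant successor, and the contrapositive of the transfer statement yields that $\bl$ has no addable $i$-node, as required.

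The main obstacle is precisely the excluded residue $i = 0$, which sits at the boundary of the sections $\{ae,\dotsc,ae+e-1\}$ used to build the Uglov map: there the successor of $\uu_j(x)$ wraps into a neighbouring component, so the transfer statement breaks down, and in addition the term $(\ell-1)\delta_{i0}$ destroys the clean identity between $\hub_0(\B)$ and $\hub_0(B)$. I would sidestep this by a uniform shift of the charge. Replacing $\br$ by $\br + \bone$ shifts every $e$-residue up by one, turning addable $0$-nodes relative to $\br$ into addable $1$-nodes relative to $\br + \bone$; meanwhile $\br + \bone$ still lies in $\AAbar$, Proposition \ref{P:Uglov-partition}(5) ensures $\wt(\bl;\br+\bone) = \wt(\bl;\br) = w$, and $\hub_1^{\br+\bone}(\bl) = \hub_0^{\br}(\bl) = k$. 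This reduces the case $i = 0$ to the already-settled case of a nonzero residue, completing the proof.
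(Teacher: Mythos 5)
Your proposal is correct and follows essentially the same route as the paper's own proof: reduce via the $\EW_{\ell}$-action to a multicharge in $\AAbar$, transfer addable $i$-nodes through the Uglov map using Lemma \ref{L:Uglov map} and the hub formula of Lemma \ref{L:}(3), apply Scopes' Lemma \ref{L:Scopes}, and dispose of the residue $i=0$ by the shift $\br \mapsto \br + \bone$ together with Proposition \ref{P:Uglov-partition}(5). The paper merely packages the shift uniformly as $\bt' = \bt + \delta_{i0}\bone$, $i' = i + \delta_{i0}$ so that both cases are handled at once, but the argument is the same.
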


\begin{proof}
Let $C = \cs_i \DDot{} B$.  By interchanging the roles of $B$ and $C$ if necessary, we may assume that $k = \hub_i(B)$.
Let $\bl = (\lambda^{(1)}, \dotsc, \lambda^{(\ell)})$ be an $\ell$-partition lying in $B$, and let $(\bm;\bt) \in (\bl;\br)^{\EW_{\ell}}$ be such that $\wt(\bm;\bt) = \min(\wt((\bl;\br)^{\EW_{\ell}}))$.
Note that if $(\bm;\bt) = (\bl;\br)^{\sigma\bu}$ ($\sigma \in \sym_{\ell}$, $\bu \in \Z^{\ell}$), there is an obvious correspondence between the nodes $(a,b,i) \in [\bm]$ and those of $[\bl]$ as dictated by $\sigma$, and that under this correspondence the $e$-residues relative to $\bt$ of the former are exactly the same as the $e$-residues relative to $\br$ of the latter (since $\bt = (\br^{\sigma})^{\bu} \equiv_e \br^{\sigma}$). Consequently, $\bl$ has no addable $i$-node relative to $\br$ if and only if $\bm$ has no addable $i$-node relative to $\bt$.

Let $\bt' = \bt + \delta_{i0}\bone$ and $i' = i + \delta_{i0}$, where $\bone = \sum_{j=1}^{\ell} \Be_j \in \Z^{\ell}$.
Then $i' >0$, and an $i$-node relative to $\bt$ is precisely an $i'$-node relative to $\bt'$.  In particular, $\bm$ has no addable $i$-node relative to $\bt$ if and only if it has no addable $i'$-nodes relative to $\bt'$.

By Proposition \ref{P:Uglov-partition}(5), we have
$$
\wt(\UU(\beta_{\bt'}(\bm))) = \wt(\bm;\bt') = \wt(\bm;\bt) = w.$$
In addition,
$$
\hub_{i'} (\UU(\beta_{\bt'}(\bm))) = \sum_{j=1}^{\ell} \hub_{i'} (\beta_{t'_j}(\mu^{(j)})) = \sum_{j=1}^{\ell} \hub_{i}(\beta_{r_j}(\lambda^{(j)})) = \hub^{\br}_i(\bl) = \hub_i(B) = k
$$
by Lemma \ref{L:}(3).
Thus, there does not exist $x \in \UU(\beta_{\bt'}(\bm))$ with $x \equiv_e i'-1$ such that $x + 1 \notin \UU(\beta_{\bt'}(\bm))$ by Lemma \ref{L:Scopes}.

Now, if $\bm$ has an addable $i'$-node relative to $\bt'$,
then there exists $a \in \beta_{t'_j}(\mu^{(j)})$ for some $j \in \{1,\dotsc, \ell\}$ such that $a \equiv_e i'-1$ and $a+1 \notin \beta_{t'_j}(\mu^{(j)})$,
and so
$$
\uu_j(a) \in \uu_j(\beta_{t'_j}(\mu^{(j)})) \subseteq \UU(\beta_{\bt'}(\bm))
$$
while
$$
\uu_j(a) + 1 = \uu_j(a+1) \notin \UU(\beta_{\bt'}(\bm)),
$$
giving us a contradiction, since $\uu_j(a) \equiv_e a$ by Lemma \ref{L:Uglov map}(1).
Consequently $\bm$ has no addable $i'$-nodes relative to $\bt'$ and hence $\bl$ has no addable $i$-node relative to $\br$ as desired.
\end{proof}

The converse of Theorem \ref{T:Scopes} is true for $\ell =1$, but not true for $\ell \geq 2$; i.e.\ it is possible for a $[w:k]$-pair $B$ and $C$ to be Scopes equivalent even when $k < w$ if $\ell \geq 2$:

\begin{eg} \label{E:counter-eg}
Let $\ell = 2$, $e = 5$, $\br = (2,2)$, and let $B$ be a block of $\HH_5$ with $e$-weight $2$ and empty $e$-core.  Then the $2$-partitions of $5$ lying in $B$ are
$$
((3,2),\varnothing), (\varnothing, (3,2)), ((3,1),(1)), ((1),(3,1)), ((3), (1,1)), ((1,1), (3)).
$$
It can be easily verified that each of these partitions has exactly one removable $0$-node but no addable $0$-node relative to $\br$, so that $B$ and $\cs_0 \DDot{} B$ form a $[2:1]$-pair and are Scopes equivalent.
\end{eg}

\begin{rem}
Dell'Arciprete \cite{Dell'Arciprete} has looked at Scopes equivalences between blocks of Ariki-Koike algebras, and provided a sufficient condition for $B$ and $\cs_i \DDot{} B$ to be Scopes equivalent.  Her sufficient condition is actually satisfied by the blocks in Example \ref{E:counter-eg}.

On the other hand, there are also examples of blocks satisfying the conditions of Theorem \ref{T:Scopes} (and hence are Scopes equivalent) that do not satisfy her condition.  One such example is the block $B$ containing the $4$-partition $((2),(3), (1^3), (1^2))$ with $4$-charge $(1,1,3,3)$ (where $e \geq 4$) and $\cs_2 \DDot{} B$.
\end{rem}

We end this paper with the following remark:
\begin{rem}
We briefly treat the case where $e = \infty$ (i.e.\ $q \in \FF^{\times}$ not being a root of unity) here.  It is not difficult to see that all results in this paper continue to hold in this case as long as $e$ is taken to be sufficiently large.  For example, two Specht modules $\Sp^{\bl}$ and $\Sp^{\bm}$ lie in the same block of $\HH_{\FF,q,\br}(n)$ (where $q$ is not a root of unity) if and only if $\mathsf{core}_{e'}(\bl;\br) = \mathsf{core}_{e'}(\bm;\br)$ and $\min(\mathsf{wt}_{e'}((\bl;\br)^{\EW_{\ell}})) = \min(\mathsf{wt}_{e'}((\bm;\br)^{\EW_{\ell}}))$ for all large enough $e'$.
\end{rem}

%


\section*{Data availability}
We do not analyse or generate any datasets, because our work
proceeds within a theoretical and mathematical approach. One
can obtain the relevant materials from the references below.

\section*{Competing interests}
The authors declare no competing interests.

\begin{footnotesize}

\end{footnotesize}

\end{document}